\newif\ifarXiv
\theoremstyle{plain}
\newtheorem{theorem}{Theorem}[section]
\newtheorem{lemma}[theorem]{Lemma}
\newtheorem{proposition}[theorem]{Proposition}
\theoremstyle{definition}
\theoremstyle{remark}
\renewcommand{\emptyset}{\varnothing}
\title{\bf Note on the subgraph component polynomial}
\author{
Yunhua Liao\\
Department of Mathematics \\
Hunan Normal University,  Changsha, 410081, Hunan, China\\
\and
Yaoping Hou$\footnote{Corresponding author: yphou@hunnu.edu.cn}$  \\
Department of Mathematics \\
Hunan First Normal University,  Changsha, 410205, Hunan, China
}
\begin{document}

\maketitle


\begin{abstract}
Tittmann, Averbouch and Makowsky [P. Tittmann, I. Averbouch, J.A. Makowsky, The enumeration of vertex induced subgraphs with respect to the number of components, European Journal of Combinatorics, 32 (2011) 954-974], introduced the subgraph component polynomial $Q(G;x,y)$ which counts the number of connected components in vertex induced subgraphs. It contains much of the underlying graph's structural information, such as  the order, the size, the independence number. We show that there are  several other graph invariants, like the connectivity, the number of cycles of length four in a regular bipartite graph, are determined by the subgraph component polynomial.  We prove that several well-known families of graphs  are determined by the polynomial $Q(G;x,y).$ Moreover, we study the distinguishing power and find some simple graphs which are not distinguished by the subgraph component polynomial but distinguished by the character polynomial, the matching polynomial or the Tutte polynomial. These are answers to three problems  proposed by Tittmann et. al.

\end{abstract}

\section{Introduction}

 All graphs in this paper are simple and finite. Let $G=(V(G),E(G))$ be a graph. The \emph{order} and the \emph{size} of $G$ denote the number of vertices and the number of edges of $G$, respectively. The \emph{complete graph}, the \emph{cycle}, and the \emph{path} of order $n$, are denoted by $K_n$, $C_n$ and $P_n$, respectively. We denote the \emph{complete bipartite graph} with part sizes $m$ and $n$, by $K_{m,n}$. Also, $K_{1,n}$ is called a \emph{star}. Given a vertex $v \in V(G)$, the \emph{open neighborhood} of  vertex $v$ is denoted $N(v)$ and the \emph{closed neighborhood} is denoted $N[v]$. The degree of $v$ is the number of edges incident with $v$ and is denoted by $d(v)$. A \emph{pendant vertex} is a vertex of degree one. We denote the minimum degree of the vertices of $G$ by $\delta (G)$. If $U\subseteq V(G)$ is a vertex subset, then we denote the vertex induced subgraph of $G$ on $U$ by $G[U]$. An \emph{independent set} in a graph is a set of vertices no two of them are adjacent. The \emph{independence number} $\alpha(G)$ is defined as the cardinality of a maximum independent set in graph $G$.

A graph $G$ is \emph{connected} if any two of its vertices are linked by a path. A \emph{separating set} of a connected graph $G$ is a set of vertices whose removal renders $G$ disconnected. The \emph{connectivity} $c(G)$ (where $G$ is not a complete graph) is the order of a minimal separating set.  Obviously, $c(G) \leqslant \delta (G)$. A graph is called \emph{k-connected} if its vertex connectivity is  not less than $k$. This means if a graph $G$ is $k$-connected, then $G[ V\backslash U] $ is connected for every subset $U \subseteq V(G)$ with $|U|< k$. Complete graph $K_n$ has no separating set at all, but by convention $c(K_n)= n-1$. We say that a connected graph $G$ is a \emph{unicycle} if $|V(G)|=|E(G)|$. We can regard a unicycle as a cycle attached with each vertex a (rooted) tree. Further details of many of the concepts treated here can be found in Diestel \cite{Diestel00}.

There are several well-known graph polynomials, e.g. the Tutte polynomial \cite{Tutte54,Ellis11,Bernardi08}, the matching polynomial \cite{Farrell79,Dong12,Yan09}, the domination polynomial \cite{Akbari10,Kotek12} and the edge elimination polynomial \cite{Averbouch10,Trinks12}.

Recently, Tittmann, Averbouch and Makowsky \cite{Tittmann11} introduced a new graph polynomial, the subgraph component polynomial that is denoted by $Q(G;x,y)$, which counts the number of connected components in induced subgraphs. Like the bivariate Tutte polynomial, the polynomial $Q(G;x,y)$ has several remarkable properties, e.g. it is universal with respect to vertex elimination.  Tittmann et al. showed that the order, the size, the number of components and the independence number can be determined by $Q(G;x,y)$. In addition, they find the star graph $K_{1,n}$ is determined by $Q(G;x,y)$.  A number of open problems concerning the subgraph component polynomial  were posed  in their paper.
In this paper, we are mainly concerned with the following three problems
  in \cite{Tittmann11}.

\vspace*{2mm}
{\bf Problem 1.1.} {\em
Are there simple graphs distinguished by $p(G;x)$, $m(G;x)$, $P(G;x,y)$ or $T(G;x,y)$ which are not distinguished by $Q(G;x,y)?$}
\vspace*{2mm}

 {\bf Problem 1.2.} {\em Find more graph invariants which are determined by $Q(G;x,y)$.}
\vspace*{2mm}

 {\bf Problem 1.3.} {\em  Find more classes of graphs which are determined by $Q(G;x,y)$.}
\vspace*{2mm}

The aim of this paper is provide some posivtive answers to above three problems  and our main findings are:
\begin{itemize}
  \item We discover much more information  hiding in the polynomial $Q(G;x,y)$, e.g. the connectivity $c(G)$ (Theorem~\ref{theorem:connectivity}), regularity (Proposition~\ref{proposition:regularity}). In particular, if $G$ is a regular bipartite graph, the number of cycles of length four can be determined by $Q(G;x,y)$ (Theorem~\ref{theorem:4cycle}), while it is a well-known fact that this parameter is also determined by the Tutte polynomial \cite{Mier04}.
  \item We find several classes of graphs which are determined by the subgraph component polynomial, e.g. the path $P_n$, the cycle $C_n$, the complete bipartite graph $K_{m,n}$, the friendship graph $C_3^n$, the book graph $B_n$ and the $n$-cube $Q^n$ (Section 4).
  \item We find two simple graphs distinguished by the character polynomial, the matching polynomial or the Tutte polynomial which are not distinguished by the subgraph component polynomial (Proposition~\ref{proposition:power}).
\end{itemize}
%
%
   \section{The subgraph component polynomial}
The subgraph component polynomial $Q(G; x, y)$ of a graph  was introduced  by P. Tittmann, I. Averbouch and J. A. Makowsky in \cite{Tittmann11}, and have been further studied by Garijo et al. in \cite{Garijo11,Garijo13}. This graph polynomial arises from analyzing community
structures in social networks. Let $k\left( G\right) $ be the number of components of $G$, and let $q_{i,j}\left( G\right) $ be the number of vertex subsets $X\subseteq V$ with $i$ vertices such that $G\left[ X\right] $ has exactly $j$ components:
\[
q_{i,j}\left( G\right) =\left\vert \left\{ X\subseteq V:\left\vert
X\right\vert =i\wedge k\left( G\left[ X\right] \right) =j\right\}
\right\vert.
\]
The \emph{subgraph component polynomial} of $G$ is defined as an ordinary generating function for these numbers:
\[
Q\left( G;x,y\right) =\sum_{i=0}^{n}\sum_{j=0}^{n}q_{i,j}\left( G\right)
x^{i}y^{j}.
\]
If we sum over all the possible subsets of vertices, the definition can be rewritten in a slightly different way:
\[
Q\left( G;x,y\right) =\sum_{X\subseteq V}
x^{|X|}y^{k(G[X])}.
\]
P. Tittmann et al. defined three types of vertex elimination operations on graphs:

\begin{itemize}
  \item \emph{Deletion}. $G-v$ denote the graph obtained by simply removing the vertex $v$.
  \item \emph{Extraction}. $G-N[v]$ denote the graph obtained from $G$ by removal of all vertices adjacent to $v$ including $v$ itself.
  \item \emph{Contraction}. $G/v$ denote the graph  obtained from $G$ by removal of $v$ and insertion of edges between all pairs of non-adjacent neighbor vertices of $v$.
\end{itemize}
They showed that $Q(G;x,y)$ satisfies the following linear recurrence relation with respect to this three kinds of vertex elimination operations and is universal in this respect.
\begin{proposition}\label{proposition:elimination}{\bf \cite{Tittmann11}}
Let $G=(V,E)$ be a graph and $v\in V$. Then the subgraph component polynomial satisfies the decomposition formula
\begin{equation*}
Q(G;x,y)=Q(G-v;x,y)+x(y-1)Q(G-N[v];x,y)+xQ(G/v;x,y).
\end{equation*}
\end{proposition}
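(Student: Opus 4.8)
The plan is to prove the decomposition formula by partitioning the sum defining $Q(G;x,y)$ according to how each vertex subset $X \subseteq V$ interacts with the chosen vertex $v$. I would classify every $X$ into two disjoint cases: those with $v \notin X$, and those with $v \in X$. The subsets avoiding $v$ are exactly the subsets of $V(G-v)$, and for such $X$ we have $G[X] = (G-v)[X]$, so this family contributes precisely $Q(G-v;x,y)$. The real work lies in handling the subsets containing $v$, and splitting their contribution correctly into the remaining two terms $x(y-1)Q(G-N[v];x,y)$ and $xQ(G/v;x,y)$.

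First I would factor out the common $x$ coming from the vertex $v$ itself: writing $X = \{v\} \cup X'$ with $X' \subseteq V \setminus \{v\}$, each such term carries a factor $x^{|X|} = x \cdot x^{|X'|}$. The key observation is that the component count $k(G[\{v\}\cup X'])$ depends on whether $X'$ meets the neighborhood $N(v)$. The plan is to compare $k(G[\{v\}\cup X'])$ against the component counts for the contraction $G/v$ and the extraction $G-N[v]$ on the same underlying set $X'$. When $X'$ contains at least one neighbor of $v$, adding $v$ merges the components of $X'$ that touch $N(v)$ into a single component together with $v$; I would argue this is exactly the behavior captured by the contraction $G/v$, since $G/v$ joins the pairwise-nonadjacent neighbors of $v$ so that the relevant neighbors of $v$ lie in one component of $(G/v)[X']$ whenever they lay in components of $G[X']$ that were connected through $v$.

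The subtle point, and the step I expect to be the main obstacle, is reconciling the component bookkeeping precisely: I must show that summing $x^{|X'|} y^{k(G[\{v\}\cup X'])}$ over all $X'$ yields exactly $x\,Q(G/v;x,y)$ plus the correction term $x(y-1)Q(G-N[v];x,y)$. The correction arises from the isolated-vertex case, namely when $X'$ avoids $N(v)$ altogether (so $X' \subseteq V \setminus N[v]$). In that case $v$ forms its own component, contributing one extra component relative to what $G/v$ records, and this is precisely where the factor $(y-1)$ enters: the exponent of $y$ is one higher than the contraction accounts for, and the running variable ranges over subsets of $V(G-N[v])$, producing $Q(G-N[v];x,y)$. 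I would make this rigorous by writing, for $X'$ disjoint from $N(v)$, the identity $y^{k(G[\{v\}\cup X'])} = y \cdot y^{k((G-N[v])[X'])}$, and for $X'$ meeting $N(v)$ the identity $y^{k(G[\{v\}\cup X'])} = y^{k((G/v)[X'])}$, then carefully combining the contributions so that the contraction term is counted on all of $V\setminus\{v\}$ while subtracting and re-adding the $y$-weighted avoid-$N(v)$ contribution to produce the clean $(y-1)$ factor.

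Once the two identities above are established, the remaining algebra is routine: I would collect the subsets containing $v$ that meet $N(v)$ together with those that avoid it, reindex each family over the appropriate vertex set, and match the three resulting sums term-by-term against $Q(G-v;x,y)$, $xQ(G/v;x,y)$, and $x(y-1)Q(G-N[v];x,y)$. The crux throughout is verifying that contraction correctly mimics the merging of $v$'s neighbors' components, so the bulk of the care should go into the combinatorial argument about $k(G[\{v\}\cup X'])$ versus $k((G/v)[X'])$ in the case where $X'$ intersects $N(v)$.
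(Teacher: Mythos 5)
Your proposal is correct. Note that this paper does not prove the proposition at all: it is quoted from the cited reference [Tittmann11], so there is no internal proof to compare against. Your argument --- partitioning subsets by whether they contain $v$, then splitting the $v$-containing subsets according to whether $X'$ meets $N(v)$, using $k(G[\{v\}\cup X'])=k((G/v)[X'])$ when $X'\cap N(v)\neq\emptyset$ and $k(G[\{v\}\cup X'])=1+k((G-N[v])[X'])$ otherwise, and extending the contraction sum over all of $V\setminus\{v\}$ (which is legitimate because $(G/v)[X']=(G-N[v])[X']$ whenever $X'\subseteq V\setminus N[v]$) --- is the standard direct proof and is essentially the one given in the original reference; it is complete and would compile into a valid proof of the stated formula.
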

The previous definition of $q_{i,j}$ yields the following proposition.
\begin{proposition} \label{proposition:unimodal}
If $H$ is subgraph of $G$, then $q_{i,j}(H)\leqslant q_{i,j}(G)$.
\end{proposition}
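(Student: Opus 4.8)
The plan is to prove this directly at the level of the set families being counted, rather than by manipulating the polynomial. Recall that $q_{i,j}(G)$ is by definition the cardinality of
\[
\mathcal{F}_{i,j}(G)=\{\,X\subseteq V(G):|X|=i \text{ and } k(G[X])=j\,\}.
\]
Hence it suffices to construct, for each fixed pair $(i,j)$, an injection $\mathcal{F}_{i,j}(H)\hookrightarrow\mathcal{F}_{i,j}(G)$; comparing cardinalities then yields $q_{i,j}(H)\leqslant q_{i,j}(G)$.

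The natural candidate is the inclusion map $X\mapsto X$, which views a subset of $V(H)$ as a subset of $V(G)$. First I would check that it lands in the right family: the size $|X|=i$ is obviously unchanged, so the only thing to verify is that $k(H[X])=k(G[X])$. This is precisely where the hypothesis must be read as $H$ being a \emph{vertex-induced} subgraph $H=G[U]$ for some $U\subseteq V(G)$: inducing preserves all adjacencies among the chosen vertices, so for $X\subseteq U$ the graphs $H[X]$ and $G[X]$ coincide and in particular have equal component counts. The induced hypothesis cannot be dropped, since deleting edges (and restoring them in $G$) can merge components and thereby raise $q_{i,j}(H)$ above $q_{i,j}(G)$ for a given $j$; so isolating the induced case is the conceptual heart of the argument.

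Once $H[X]=G[X]$ is established, the proof is immediate: the inclusion map carries $\mathcal{F}_{i,j}(H)$ into $\mathcal{F}_{i,j}(G)$ and is trivially injective, being the identity on subsets. This gives the desired inequality for all $i,j$. I expect no computational obstacle here; the only genuine step is the observation above that induction forces $H[X]=G[X]$, after which everything reduces to a one-line counting comparison. It may also be worth remarking that the difference $q_{i,j}(G)-q_{i,j}(H)$ equals the number of $i$-subsets $X$ with $k(G[X])=j$ that meet $V(G)\setminus V(H)$, a refinement that could be recorded for later use.
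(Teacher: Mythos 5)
Your proof is correct and matches the paper's intent: the paper offers no proof at all, saying only that the proposition follows from the definition of $q_{i,j}$, and your inclusion map $X\mapsto X$ together with the observation that $H[X]=G[X]$ for induced $H$ is precisely the formalization of that one-line claim.

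One point in your write-up deserves emphasis, because it goes beyond the paper rather than merely reproducing it: you are right that the hypothesis must be read as \emph{vertex-induced} subgraph, and the statement as literally printed (for arbitrary subgraphs) is false. A concrete counterexample: let $G=K_2$ and let $H$ be the spanning subgraph on the same two vertices with no edge; then $q_{2,2}(H)=1$ while $q_{2,2}(G)=0$. So your isolation of the induced case is not pedantry --- it corrects an imprecision in the paper's statement, and your injection argument is the right proof of the corrected statement. (Your parenthetical refinement, that $q_{i,j}(G)-q_{i,j}(H)$ counts the qualifying $i$-subsets meeting $V(G)\setminus V(H)$, is also correct and harmless to record.)
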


In what follows, we will call two graphs $G$ and $H$ are  \emph{subgraph component equivalent}, or simply $Q$-equivalent, if $Q(G;x,y)=Q(H;x,y)$.  A graph $G$ is $Q$-unique if any graph $H$ being $Q$-equivalent to $G$ implies that $H$ is isomorphic to $G$. Let $[x^iy^j]Q(G;x,y)$ denote the coefficient of $x^iy^j$ in $Q(G;x,y)$, and let $deg_xQ(G;x,y)$ be the degree with respect to the variable $x$.
\section{Graph invariants determined by the subgraph component polynomial}
\begin{proposition} {\bf \cite{Tittmann11}}
\label{prop:number}
The following graph properties can be easily obtained from the subgraph
polynomial:
\begin{enumerate}
\item[(1)]
The number of vertices:%
\[
|V(G)| =\deg _{x}Q\left(
G;x,y\right) =\log _{2}Q\left( G;1,1\right)=\left[ xy\right] Q\left( G;x,y\right)
\]

\item[(2)]
The number of edges:%
\[
|E(G)| =\left[ x^{2}y\right] Q\left( G;x,y\right)
\]

\item[(3)]
The number of components:%
\[
k\left( G\right) =\deg_{y} \left( \left[ x^{|G| }\right] Q\left(
G;x,y\right) \right)
\]
\item[(4)]
The number of independent sets of each size; in particular, the independence number:
\[
\alpha \left( G\right)=\deg _{y}Q\left( G;x,y\right)
\]
\end{enumerate}
\end{proposition}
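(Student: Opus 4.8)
The plan is to unfold the generating-function definition $Q(G;x,y)=\sum_{X\subseteq V}x^{|X|}y^{k(G[X])}$ and match each of the four claimed quantities against a count of vertex subsets, so that every assertion becomes a direct reading of a coefficient, a degree, or a specialization. No deep structural input is needed; the work lies entirely in identifying which subsets contribute to which monomial.

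For part (1) I would dispatch the three expressions in turn. Specializing to $x=y=1$ collapses every monomial to $1$, so that $Q(G;1,1)=\sum_{X\subseteq V}1=2^{|V(G)|}$, whence $\log_2 Q(G;1,1)=|V(G)|$. The highest power of $x$ is contributed only by the full set $X=V$, which has $|X|=|V(G)|$, giving $\deg_x Q=|V(G)|$. Finally $[xy]Q=q_{1,1}(G)$ counts the single-vertex subsets $X$ with $k(G[X])=1$; since any one vertex spans exactly one component, this is the number of vertices. Parts (2) and (3) follow the same principle: the coefficient $[x^2y]Q=q_{2,1}(G)$ counts the two-element subsets $\{u,v\}$ for which $G[\{u,v\}]$ is connected, i.e.\ for which $u$ and $v$ are adjacent, which is precisely $|E(G)|$; and extracting $[x^{|G|}]$ isolates the unique subset of size $|V(G)|$, namely $X=V$, so $[x^{|G|}]Q=y^{k(G)}$ and its $y$-degree returns $k(G)$.

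Part (4) is the only assertion requiring more than bookkeeping, and I expect the genuine content to reside there. A subset $X$ is independent exactly when $G[X]$ has no edges, equivalently when $k(G[X])=|X|$, so $q_{i,i}(G)$ enumerates the independent sets of size $i$, giving the count of independent sets of each size. For the independence number I would show that the maximum exponent of $y$ appearing in $Q$ equals $\alpha(G)$. The inequality $\deg_y Q\geqslant\alpha(G)$ is immediate, since a maximum independent set $X$ contributes the monomial $y^{|X|}=y^{\alpha(G)}$.

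The reverse inequality is the one step that needs an actual argument rather than a reading of the definition. For an arbitrary $X\subseteq V$, choosing a single vertex from each of the $k(G[X])$ components of $G[X]$ yields an independent set of $G$ of size $k(G[X])$, because vertices lying in distinct components are non-adjacent; hence $k(G[X])\leqslant\alpha(G)$ for every $X$. Consequently no monomial of $Q$ carries a $y$-exponent exceeding $\alpha(G)$, so $\deg_y Q\leqslant\alpha(G)$, and combined with the previous bound we obtain $\deg_y Q=\alpha(G)$. This transversal observation is the only place where the proof does more than track exponents.
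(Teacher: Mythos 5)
Your proof is correct, and it is worth noting that the paper itself offers nothing to compare it against: Proposition~\ref{prop:number} is quoted from the cited work of Tittmann, Averbouch and Makowsky with no proof given. Your argument is the natural one and is complete --- parts (1)--(3) are direct coefficient/degree readings of $Q(G;x,y)=\sum_{X\subseteq V}x^{|X|}y^{k(G[X])}$, and the one nontrivial step, the bound $\deg_y Q\leqslant \alpha(G)$ in part (4), is correctly handled by your transversal observation that picking one vertex from each component of $G[X]$ yields an independent set of $G$ (two vertices of $X$ adjacent in $G$ would lie in the same component of $G[X]$), so $k(G[X])\leqslant\alpha(G)$ for every $X$.
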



Since the order, the size and the number of components of a graph $G$ is determined by its subgraph component polynomial $Q(G;x,y)$, it is clear that if $G$ is a tree and $H$ is $Q$-equivalent to $G$, then $H$ is also a tree.
\begin{theorem}\label{theorem:connectivity}
The connectivity of a graph $G$ is  determined by its subgraph component polynomial.
\end{theorem}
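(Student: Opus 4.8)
The plan is to recover $c(G)$ by a complementation argument: I will translate ``minimum separating set'' into ``largest disconnected induced subgraph'', and then read the relevant count directly off the coefficients of $Q(G;x,y)$. First I would record the data that $Q$ hands us for free. By Proposition~\ref{prop:number} the order $n=|V(G)|$, the size $|E(G)|$, and the number of components $k(G)$ are all determined by $Q(G;x,y)$. This lets me dispose of the exceptional case in the definition: if $|E(G)|=\binom{n}{2}$ then $G=K_n$, and I set $c(G)=n-1$ by convention, a value visibly determined by $Q$. Otherwise $G$ is not complete, and I aim for a single formula covering both the connected and (if one insists) the disconnected case.

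Next comes the key translation. A set $U\subseteq V$ is separating exactly when $G[V\setminus U]$ is disconnected, so writing $X=V\setminus U$,
\[
c(G)=\min\{\,|U| : k(G[V\setminus U])\geq 2\,\}=n-\max\{\, i : \text{some } X \text{ with } |X|=i \text{ has } k(G[X])\geq 2 \,\}.
\]
The point is that for each $i\geq 1$ the coefficient $q_{i,1}(G)=[x^iy]\,Q(G;x,y)$ counts the induced subgraphs on $i$ vertices that are connected, while $\binom{n}{i}$ counts all of them; since $q_{i,0}=0$ for $i\geq 1$, the number of $i$-subsets inducing a disconnected subgraph is exactly $\binom{n}{i}-q_{i,1}(G)$. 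Hence a disconnected induced subgraph on $i$ vertices exists iff $q_{i,1}(G)<\binom{n}{i}$, and setting $m=\max\{\, i : q_{i,1}(G)<\binom{n}{i}\,\}$ gives $c(G)=n-m$. Both $\binom{n}{i}$ (from $n$) and $q_{i,1}(G)$ (a coefficient) are read off $Q$, so $c(G)$ is determined.

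Finally I would check the boundary behaviour to confirm $m$ is computed honestly. The index $i=1$ never triggers the inequality, since every singleton is connected and so $q_{1,1}(G)=n=\binom{n}{1}$; and when $G$ is not complete there are two non-adjacent vertices, so $q_{2,1}(G)<\binom{n}{2}$, whence $m\geq 2$ and the spurious $i=0$ term (which always satisfies $q_{0,1}=0<1$) is irrelevant to the maximum. One also notes the formula is self-correcting on the disconnected case: if $k(G)\geq 2$ then $q_{n,1}(G)=0<1$, forcing $m=n$ and $c(G)=0$, as it should be. The only genuinely delicate point is the $K_n$ convention, which is why I isolate complete graphs at the very start rather than trusting the formula there: for $K_n$ every induced subgraph is connected, so the formula would return $n$ instead of the conventional $n-1$. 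I expect the verification of these edge cases, rather than the main counting identity, to be where care is most needed.
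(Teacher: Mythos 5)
Your proof is correct and follows essentially the same route as the paper's: both isolate the complete graphs first (using that order and size are determined by $Q$) and then use the translation $c(G)=n-\max\{i:\text{some }i\text{-set induces a disconnected subgraph}\}$, reading this maximum off the coefficients of $Q(G;x,y)$. The only cosmetic difference is that you detect the existence of a disconnected induced subgraph on $i$ vertices via the complementary count $q_{i,1}(G)<\binom{n}{i}$, whereas the paper checks directly whether $[x^i y^j]Q(G;x,y)>0$ for some $j\geqslant 2$; these tests are equivalent, and your treatment of the boundary cases is if anything more careful than the paper's.
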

\begin{proof}
As we shall show that complete graphs are determined by the subgraph component polynomial in the next section, we just consider graphs which are not complete here. Let $G=(V,E)$ be a graph of order $n$. Given a vertex subset $S$ with cardinality $s$, then $k(G[V\backslash S])\geqslant 2$ when $S$ is a separating set of $G$. From the definition of the connectivity $\kappa(G)$, we have the following relation:
\begin{eqnarray*}
 c(G)&=&min \left\{s:\left[x^{n-s}y^{j} \right]Q\left( G;x,y \right)> 0 \wedge j \geqslant 2 \right\}\\
                  &=&n-max\left \{deg_{x}\left(\left [y^j \right]Q\left(G;x,y \right)\right): j\geqslant 2 \right\}.
\end{eqnarray*}
\end{proof}

Since the connectivity is  a lower bound for the minimum degree $\delta(G)$, the order and the size of graph $G$ are determined by $Q(G;x,y)$ as well, then we can conclude that the regularity of graph $G$ is also determined by $Q(G;x,y)$.

\begin{proposition}
\label{proposition:regularity}
Let $G$ be a $k$-regular graph. If $H$ is $Q$-equivalent to $G$, then $H$ is $k$-regular.
\end{proposition}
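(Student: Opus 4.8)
The plan is to show that the regularity—both the property of being regular and the value $k$ of the common degree—is encoded in invariants we have already extracted from $Q(G;x,y)$. The key observation is that a graph on $n$ vertices with $m$ edges is $k$-regular precisely when two numerical conditions hold simultaneously: the degree sequence is constant, equal to $k$, which forces $m = nk/2$; and the minimum degree equals $k$. Since Proposition~\ref{prop:number} gives us both $n = |V(G)|$ and $m = |E(G)|$ from $Q(G;x,y)$, we immediately recover the \emph{average} degree $2m/n$ from the polynomial. A $k$-regular graph has average degree exactly $k$, so any graph $H$ that is $Q$-equivalent to $G$ automatically has the same order, the same size, and hence the same average degree $k$. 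The only remaining task is to rule out non-regular graphs $H$ with this same average degree.

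\medskip

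First I would invoke Theorem~\ref{theorem:connectivity}, which tells us that the connectivity $c(G)$ is determined by $Q(G;x,y)$. Since $G$ is $k$-regular, we have $\delta(G) = k$, and the standard inequality $c(G) \leqslant \delta(G)$ from the introduction gives $c(G) \leqslant k$. The crux is to push this further: for a $k$-regular graph the average degree $2m/n$ equals $k$, which is also an upper bound on $\delta$, so $\delta(G) = 2m/n = k$. Now for the $Q$-equivalent graph $H$ we know $c(H) = c(G)$, and we know $2|E(H)|/|V(H)| = 2m/n = k$ because order and size are preserved. The plan is then to argue that $H$ must also satisfy $\delta(H) = k$: since the average degree of $H$ is $k$, we have $\delta(H) \leqslant k$, and combining this with the fact that $H$ cannot have a vertex of degree smaller than its average degree would complete the argument.

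\medskip

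The main obstacle—and the point where the plan needs care—is that the connectivity alone does not force $\delta(H) = k$, because $c(H) \leqslant \delta(H)$ only gives a lower bound of the wrong direction. The genuinely useful constraint is the average-degree identity $2m/n = k$ together with the elementary fact that \emph{any} graph whose average degree equals its maximum possible value under the constraint $\delta \le \text{avg} \le \Delta$ must be regular: if $2|E(H)|/|V(H)| = k$ and simultaneously $\delta(H) = k$, then every vertex has degree exactly $k$ since no degree can exceed the average while another falls below it when the minimum already matches the average. Thus the entire argument reduces to establishing $\delta(H) = k$. I would obtain this by showing that the subgraph component polynomial determines the minimum degree directly—indeed, $\delta(H)$ is recoverable because a vertex of degree $d$ together with its neighborhood contributes a readily identifiable pattern to the coefficients $q_{i,j}$, and more simply because a $k$-regular graph forces $\delta = k = 2m/n$ and any $Q$-equivalent $H$ inheriting the same $n$ and $m$ has average degree $k$, whence $\delta(H) \leqslant k$; equality then follows from the handshake identity $\sum_{v} d_H(v) = 2|E(H)| = nk$, since a strictly smaller minimum would force some degree above $k$, contradicting that $k$ is the maximum attainable when $\delta(H)=\delta(G)$ is preserved. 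Closing this last gap rigorously—confirming that $\delta$ itself is a $Q$-invariant rather than merely bounded—is the step I expect to require the most attention.
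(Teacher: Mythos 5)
Your reduction is sound as far as it goes, and you have correctly located the weak point: everything hinges on showing $\delta(H) \geqslant k$, and the connectivity invariant cannot supply this, since $c(H)=c(G)\leqslant \delta(G)=k$ only yields $\delta(H)\geqslant c(G)$, and $c(G)$ may be strictly smaller than $k$ (a cubic graph with a bridge has $c=1<3=\delta$). The problem is that your proposal never closes this gap. The two patches you offer do not work: the claim that ``$H$ cannot have a vertex of degree smaller than its average degree'' is false for general graphs, and your final argument (``a strictly smaller minimum would force some degree above $k$, contradicting that $k$ is the maximum attainable when $\delta(H)=\delta(G)$ is preserved'') is circular---it presupposes $\delta(H)=\delta(G)$, which is exactly what is to be proved, and no upper bound $\Delta(H)\leqslant k$ has been established anywhere. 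The assertion that a vertex of degree $d$ leaves a ``readily identifiable pattern'' in the coefficients $q_{i,j}$ is likewise unsubstantiated; it is not at all clear that $\delta$ is a $Q$-invariant in general. So, as you yourself concede in your last sentence, the proof is incomplete.

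For what it is worth, your route is essentially the paper's own: the paper justifies this proposition in a single sentence by combining the bound $c\leqslant\delta$ with the fact that order and size are $Q$-invariants, and that sketch is airtight only when $c(G)=\delta(G)=k$ (as it is for the hypercubes, to which the proposition is later applied). A way to close the gap for arbitrary $k$-regular $G$ is to extract one more invariant from $Q$: writing $A_i$ for the number of $3$-subsets of $V$ inducing exactly $i$ edges, one has $A_1=q_{3,2}$, $A_2+A_3=q_{3,1}$, and $A_1+2A_2+3A_3=m(n-2)$, so the number of triangles $A_3$, the number of induced paths $A_2$, and hence $\sum_{v}\binom{d(v)}{2}=A_2+3A_3$ are all determined by $Q(G;x,y)$ together with $n$ and $m$. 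If $H$ is $Q$-equivalent to the $k$-regular graph $G$, it follows that $\sum_v d_H(v)=nk$ and $\sum_v d_H(v)^2 = nk+2\sum_v\binom{d_H(v)}{2}=nk^2$, and Cauchy--Schwarz then forces every degree of $H$ to equal $k$. This supplies exactly the step you identified as missing.
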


\begin{theorem}  \label{theorem:4cycle}
Let $G$ be a $k$-regular bipartite graph of order $n$. Then the number of subgraphs isomorphic to $C_4$ and the number of subgraphs isomorphic to $P_4$ are determined by $Q(G;x,y)$.
\end{theorem}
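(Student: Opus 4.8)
The plan is to read off a few coefficients of $Q(G;x,y)$ to recover the number of $4$-vertex subsets whose induced subgraph is connected, and then to convert this into the desired subgraph counts using that $G$, being bipartite, is triangle-free. First I would record the data that Proposition~\ref{prop:number} and Proposition~\ref{proposition:regularity} make available: the order $n=\deg_x Q$, the size $m=[x^2y]Q(G;x,y)$, and hence the regularity $k=2m/n$. Since the only connected graphs on four vertices that contain no triangle are $P_4$, $K_{1,3}$ and $C_4$, the coefficient $q_{4,1}=[x^4y]Q(G;x,y)$, which counts the $4$-subsets inducing a single component, splits as
\[
q_{4,1}= I(P_4)+I(K_{1,3})+I(C_4),
\]
where $I(\cdot)$ denotes the number of $4$-subsets inducing the named graph.

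Next I would evaluate the three terms. Because the neighbours of any vertex lie in one part and are therefore pairwise non-adjacent, an induced $K_{1,3}$ is exactly a vertex together with three of its neighbours, so $I(K_{1,3})=n\binom{k}{3}$. For paths I would count $P_4$-subgraphs by their middle edge: an edge $\{b,c\}$ extends to $a\!-\!b\!-\!c\!-\!d$ by choosing $a\in N(b)\setminus\{c\}$ and $d\in N(c)\setminus\{b\}$, and triangle-freeness forces $a\neq d$, so the number of $P_4$-subgraphs equals $\sum_{\{b,c\}\in E}(d(b)-1)(d(c)-1)=m(k-1)^2$. Finally I would relate subgraphs to induced subgraphs: a $C_4$-subgraph can have no chord in a bipartite graph, so it is induced and $I(C_4)$ equals the number of $C_4$-subgraphs; while a $P_4$-subgraph fails to be induced exactly when its two ends are adjacent, in which case its vertex set induces a $C_4$, and each $C_4$ arises from precisely four such paths. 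Hence the number of $P_4$-subgraphs equals $I(P_4)+4\,I(C_4)$.

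Combining these, $I(P_4)=m(k-1)^2-4\,I(C_4)$, and substituting into $q_{4,1}=I(P_4)+I(K_{1,3})+I(C_4)$ gives $q_{4,1}=m(k-1)^2+n\binom{k}{3}-3\,I(C_4)$, so that the number of $C_4$-subgraphs is
\[
I(C_4)=\tfrac{1}{3}\Bigl(m(k-1)^2+n\tbinom{k}{3}-q_{4,1}\Bigr),
\]
and the number of $P_4$-subgraphs is $m(k-1)^2$. As every quantity on the right is a coefficient of $Q(G;x,y)$ or a function of $n$ and $m$, both counts are determined by the subgraph component polynomial. I expect the only delicate point to be the bookkeeping in the last step: one must use triangle-freeness twice—once to classify the connected $4$-vertex induced subgraphs and to force distinct path endpoints, and once to guarantee that a $C_4$-subgraph is chordless—and must get the multiplicity $4$ in the passage from induced to non-induced $P_4$'s correct, since an error there would corrupt the coefficient of $I(C_4)$.
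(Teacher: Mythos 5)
Your proof is correct, and its first half coincides with the paper's: both decompose the coefficient $[x^4y]Q(G;x,y)$ as (induced $P_4$'s) $+$ (induced $C_4$'s) $+\, n\binom{k}{3}$, using triangle-freeness to see that $P_4$, $C_4$ and $K_{1,3}$ are the only connected $4$-vertex induced subgraphs and that induced stars are exactly a vertex with three of its neighbours. Where you genuinely diverge is in obtaining the second equation. The paper reads off another coefficient of the polynomial: $[x^3y]Q(G;x,y)$ counts induced $K_{1,2}$'s, and a double count of one-vertex extensions of these (each induced $P_4$ arising twice, each induced $C_4$ eight times) gives $(2k-2)[x^3y]Q = 2p+8c$. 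You never touch $[x^3y]$; instead you count $P_4$ subgraphs by their middle edge, getting $m(k-1)^2$ from regularity and triangle-freeness, and then pass between subgraph counts and induced counts via the observation that a non-induced $P_4$ closes up to a $C_4$, each $C_4$ absorbing exactly four such paths. The two second equations are in fact equivalent: in a $k$-regular triangle-free graph $[x^3y]Q = n\binom{k}{2}$, so the paper's relation also simplifies to $p+4c = m(k-1)^2$. What your route buys is twofold: it makes the induced-versus-subgraph distinction explicit (the paper blurs it, even though the theorem statement speaks of subgraphs), and it exposes the stronger fact that the number of $P_4$ subgraphs of a regular triangle-free graph is determined by $n$ and $k$ alone, so only the $C_4$ count genuinely needs the coefficient $q_{4,1}$. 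What the paper's route buys is thematic consistency — both unknowns are extracted from coefficients of $Q$ by solving a visibly nondegenerate $2\times 2$ linear system. Your bookkeeping at the delicate points (the factor $4$, and $a\neq d$ forced by triangle-freeness in the middle-edge count) is correct.
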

\begin{proof}
There are three  subgraphs, paths of order 4, cycles of order 4 and complete bipartite subgraphs $K_{1,3}$, which contribute to $[x^4y]Q(G;x,y)$ (see Fig.~\ref{fig.1})

\begin{figure}[h]
\begin{center}
\includegraphics[width=0.6\linewidth]{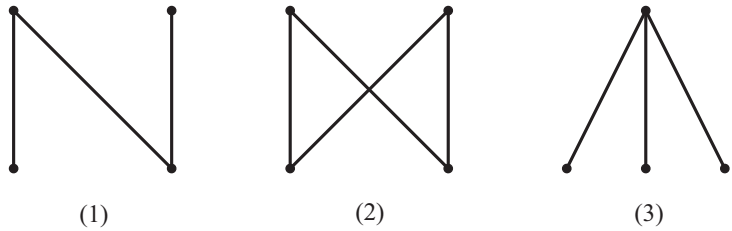}
\end{center}
\caption{All possible subgraphs in $G$ with $4$ vertices and 1 component.}
\label{fig.1}
\end{figure}
Since $G$ is $k$-regular, the contribution of  $K_{1,3}$ is $n {k \choose 3}$. It follows that
\begin{equation}\label{equation:x4y}
\left [x^4y \right ]Q\left (G;x,y\right )=p+c+n{k \choose 3}
\end{equation}
where $p$ is the number of paths of order $4$ and $c$ is the number of cycles of order $4$.

For $G$ is a bipartite graph, a subgraph $A \subseteq G$ contribute to $[x^3y]Q(G;x,y)$ if and only if $A$ is isomorphic to $K_{1,2}$. Let $K_{1,2}^+$ be a subgraph obtained from $K_{1,2}$ by adding a new vertex $u$ which is adjacent to at least one of the two vertices $x_1,x_2$ in the partite set of cardinality two. Then $K_{1,2}^+$ is a $P_4$ if the new vertex $u$ is adjacent to only one vertex of $x_1,x_2$, and $K_{1,2}^+$ is a $C_4$ if $u$ is adjacent to both $x_1$ and $x_2$. We can count  the number of $K_{1,2}^+$ (counted with multiplicity) in two ways:  As $G$ is $k$-regular, for every subgraph $K_{1,2}$, $x_1$ and $x_2$ have $2k-2$ neighbor vertices (counted with multiplicity) other than the vertex in the partite set of cardinality one. On the other hand, every $P_4$ is counted twice and
every $C_4$ is counted $8$ times in the counting process. Then the following relation holds:
\begin{equation}\label{equation:x3y}
(2k-2)\left [x^3y \right ]Q\left (G;x,y\right )=2p+8c.
\end{equation}
 Equations (\ref{equation:x4y}) and (\ref{equation:x3y}) imply that  $p$ and $c$ can be obtained from the coefficients of $Q(G;x,y)$.
\end{proof}

\section{Graphs determined by their subgraph component polynomial}
The Tutte polynomial is a well-studied graph polynomials. In \cite{Mier04}, Mier and Noy show that wheels, squares of cycles, ladders, M\"{o}bius Ladders, complete multipartite graphs, and hypercubes are $T$-unique graphs. In \cite{Tittmann11}, it was shown that the star graph $S_n$ is $Q$-unique and  is a open problem (Problem 35 in \cite{Tittmann11} ) to find more classes of graphs  which  are determined by $Q(G;x,y)$. In this section we will show that complete graphs, paths, cycles, tadpole graphs, complete bipartite graphs, friendship graphs, book graphs and hypercubes are $Q$-unique, or are determined by the subgraph component polynomial.


\subsection{Complete graphs, paths, cycles, tadpole graphs and complete bipartite graphs.}
$K_n$ is a graph of order $n$ and size $n \choose 2$. Since the order and the size are both determined by the subgraph component polynomial, so does $K_n$.
\begin{proposition}
The complete graph $K_n$ is $Q$-unique.
\end{proposition}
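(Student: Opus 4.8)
The plan is to reduce the $Q$-uniqueness of $K_n$ to the two most basic invariants already extracted from the polynomial in Proposition~\ref{prop:number}, namely the order and the size. To establish $Q$-uniqueness I must show that every graph $H$ satisfying $Q(H;x,y)=Q(K_n;x,y)$ is isomorphic to $K_n$, so the entire argument amounts to pinning $H$ down from the handful of coefficients that these invariants control.

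First I would observe that $Q$-equivalence forces the order and size of $H$ to coincide with those of $K_n$. By Proposition~\ref{prop:number} one has $|V(H)|=\deg_x Q(H;x,y)$ and $|E(H)|=[x^2y]\,Q(H;x,y)$, and both of these are read directly off the coefficients of the polynomial. Since $Q(H;x,y)=Q(K_n;x,y)$, it follows that $|V(H)|=n$ and $|E(H)|=\binom{n}{2}$. Thus $H$ is a simple graph on exactly $n$ vertices carrying exactly $\binom{n}{2}$ edges.

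The concluding step is the elementary extremal observation that among all simple graphs on $n$ vertices the number $\binom{n}{2}$ is the maximum possible edge count, and this maximum is attained by the complete graph alone. Consequently $H$ can only be $K_n$, which establishes the claim. There is no genuine obstacle here; the sole point worth stating explicitly is the edge-extremality of $K_n$ among simple graphs on a fixed vertex set, which leaves no room for a non-isomorphic competitor once the order and size are fixed.
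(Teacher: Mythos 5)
Your proof is correct and matches the paper's argument exactly: both read off the order $n$ and size $\binom{n}{2}$ via Proposition~\ref{prop:number} and conclude by the edge-extremality of $K_n$ among simple graphs on $n$ vertices. The only difference is that you spell out the extremal step, which the paper leaves implicit.
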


The cycle graph $C_n$ is a 2-connected unicycle with $n$ vertices.
\begin{proposition}\label{theorem:cycle}
The cycle $C_n$ is $Q$-unique.
\end{proposition}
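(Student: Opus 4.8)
The plan is to extract from $Q(C_n;x,y)$ just enough numerical invariants to pin down the isomorphism type of $C_n$ completely, the decisive one being $2$-regularity. Suppose $H$ is $Q$-equivalent to $C_n$. Since $C_3=K_3$ is complete and complete graphs have already been shown to be $Q$-unique, I would assume $n\ge 4$. First I would read off from Proposition~\ref{prop:number} that $H$ has exactly $n$ vertices and $n$ edges, and that $H$ has a single component, i.e. $H$ is connected.

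The key step is then to force $H$ to be $2$-regular. Since $C_n$ is $2$-regular, this is exactly Proposition~\ref{proposition:regularity}. Concretely, this is where Theorem~\ref{theorem:connectivity} does the work: the connectivity $c(C_n)=2$ is determined by $Q$, so $c(H)=2$ and hence $\delta(H)\ge c(H)=2$; but the average degree of $H$ equals $2n/n=2$ because $H$ has $n$ vertices and $n$ edges, and $\delta(H)\ge 2$ together with average degree $2$ forces every vertex of $H$ to have degree exactly $2$.

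Finally I would invoke the elementary structural fact that a connected $2$-regular simple graph on $n$ vertices is a single cycle: following the two edges at each vertex traces a closed walk with no repeated interior vertices, and connectedness makes it span all $n$ vertices, so the graph is $C_n$. Hence $H\cong C_n$, which proves $Q$-uniqueness.

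The argument is short and I do not expect a genuine obstacle; the only point requiring care is the justification of $2$-regularity, which is not immediate from order and size alone and must be routed through the connectivity invariant of Theorem~\ref{theorem:connectivity} as above. An alternative route avoiding the regularity proposition would observe that a connected graph with equally many vertices and edges is a unicycle and then rule out any attached tree edges by a finer comparison of the low-degree coefficients of $Q$; but the regularity argument is cleaner and I would prefer it.
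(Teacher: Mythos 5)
Your proposal is correct and follows essentially the same route as the paper: the paper's (very terse) proof likewise reads off from $Q$ that $H$ is $2$-connected with $|V(H)|=|E(H)|=n$ and concludes $H\cong C_n$, which is exactly the argument you spell out (minimum degree $\geqslant 2$ from $2$-connectivity, average degree $2$ from the vertex/edge count, hence $2$-regular and connected, hence a cycle). You have merely made explicit the steps the paper leaves implicit, including the harmless special treatment of $C_3=K_3$.
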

\begin{proof}
Let $H$ be $Q$-equivalent to the cycle $C_n$. Then $H$ is a 2-connected graph, $|V(H)|=|E(H)|=n$. It follows that $H\cong C_n$.
\end{proof}

\begin{theorem} \label{theorem:path}
The path $P_n$ is $Q$-unique.
\end{theorem}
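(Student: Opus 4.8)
The plan is to first pin down the coarse structure of any $Q$-equivalent graph $H$, and then isolate a single coefficient of $Q$ that detects the extremal nature of the path among trees. Since $P_n$ is connected with $|V(P_n)| = n$ and $|E(P_n)| = n-1$, and since the number of components, the order and the size are all determined by $Q(G;x,y)$ (Proposition~\ref{prop:number}), any $H$ with $Q(H;x,y) = Q(P_n;x,y)$ is connected with $n$ vertices and $n-1$ edges, hence a tree; this is exactly the observation recorded before Theorem~\ref{theorem:connectivity}. So it suffices to show that $P_n$ is the only tree on $n$ vertices with the given polynomial, and for this I would compare the coefficient $[x^3y]Q$.

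Next I would interpret $[x^3y]Q(H;x,y)$ combinatorially. By definition this coefficient counts the $3$-element vertex subsets $X$ for which $H[X]$ is connected. In a tree there are no triangles, so a connected induced subgraph on three vertices is necessarily a path $P_3$, and each such $P_3$ is determined by choosing its central vertex $v$ together with two of its neighbours; conversely any vertex $v$ and any pair of its neighbours induce a $P_3$, since an edge between the two neighbours would create a triangle. Hence for every tree
\[
[x^3y]Q(H;x,y) = \sum_{v\in V(H)} \binom{d(v)}{2}.
\]
For the path one has two leaves and $n-2$ vertices of degree $2$, so this quantity equals $n-2$, which is therefore the value forced on $H$.

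The heart of the argument is then an extremal inequality. Using $\binom{d}{2}\ge d-1$ for every integer $d\ge 1$, with equality precisely when $d\le 2$, and summing over the vertices of the tree $H$ gives
\[
\sum_{v\in V(H)}\binom{d(v)}{2} \ge \sum_{v\in V(H)}\bigl(d(v)-1\bigr) = 2(n-1)-n = n-2,
\]
with equality if and only if every vertex of $H$ has degree at most $2$. A connected acyclic graph all of whose degrees are at most $2$ is a path. Since the displayed coefficient equals $n-2$, equality holds, so $H$ is a path on $n$ vertices and therefore $H\cong P_n$.

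I expect the only real content to be the extremal step: recognizing that the single coefficient $[x^3y]Q$ equals $\sum_v\binom{d(v)}{2}$ on trees and that the path uniquely minimizes this sum. Everything else is bookkeeping with invariants already known to be $Q$-determined. The point to state carefully is the passage from \emph{connected three-vertex induced subgraph} to \emph{$P_3$ subgraph}, which relies on the absence of cycles in $H$ and hence requires having first established that $H$ is a tree.
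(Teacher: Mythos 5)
Your proof is correct, but it takes a genuinely different route from the paper's. Both arguments begin identically, reducing to the case where $H$ is a tree on $n$ vertices via the $Q$-determined invariants of Proposition~\ref{prop:number}. From there the paper works at the top end of the polynomial: it observes that deleting a single vertex from a path creates at most $2$ components, so $\deg_y[x^{n-1}]Q(H;x,y)\leqslant 2$, and then shows any tree with three pendant vertices has a branch vertex whose removal leaves at least $3$ components, forcing $H$ to have exactly two leaves and hence be a path. You instead work at the bottom end, with the single coefficient $[x^3y]Q$: in a triangle-free graph the connected induced $3$-vertex subgraphs are exactly the induced copies of $P_3$, counted by $\sum_{v}\binom{d(v)}{2}$, and the inequality $\binom{d}{2}\geqslant d-1$ (equality iff $d\leqslant 2$) combined with $\sum_v d(v)=2(n-1)$ shows the path uniquely minimizes this coefficient among trees. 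The two proofs detect the same structural fact — a tree is a path iff its maximum degree is at most $2$ iff it has exactly two leaves — but by dual means: the paper's component-counting argument is more directly structural, while yours is extremal and yields a reusable identity, namely that $[x^3y]Q(G;x,y)=\sum_v\binom{d(v)}{2}$ for every triangle-free graph $G$, which is the same counting spirit the paper later exploits in Theorem~\ref{theorem:4cycle} to recover the numbers of $P_4$'s and $C_4$'s in regular bipartite graphs. You are also right to flag that the identification of connected $3$-vertex induced subgraphs with $P_3$'s requires first knowing $H$ is acyclic; that ordering of steps is essential in both proofs.
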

\begin{proof}
Let $H$ be $Q$-equivalent to the path $P_n$. It is clear that $H$ is a tree of order $n$. Since removing a vertex in a path could increase the number of connected components by at most 1, $deg_y[x^{n-1}]Q(H;x,y)=deg_y[x^{n-1}]Q(P_n;x,y)\leqslant 2$.

{\bf Claim} There is at most two pendant vertices in $H$.

\begin{proof}
Suppose $H$ have at least three pendant vertices. Let $x,y,z$ be three pendant vertices of $H$. As $H$ is a tree, there is a unique path $P_y$ from $x$ to $y$, and we denote the unique path from $x$ to $z$ by $P_z$. Let $meet_{yz}$ be the last vertices in the intersection of $P_y$ and $P_z$. That is, the path from $meet_{yz}$ to $y$ and the path from $meet_{yz}$ to $z$ are disjoint except at $meet_{yz}$. Let $X=V(H)\backslash meet_{yz}$, then $|X|=n-1$ and $k(G[X])\geqslant 3$ which leads to $deg_y[x^{n-1}]Q(H;x,y)\geqslant 3$, a contradiction.
\end{proof}

As a tree have at least two pendant vertices, then $H$ has exactly two pendant vertices. Therefore, $H$ is a path  as the path  $P_n$ is the tree with  exact two pendant vertices.
\end{proof}

The $(m,n)$-tadpole graph $T_{m.n}$ is the graph obtained by joining a path graph $P_m$ to  a cycle graph $C_n$ with a bridge.
\begin{theorem}
The tadpole graph $T_{m,n}$ is $Q$-unique.
\end{theorem}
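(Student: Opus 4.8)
The plan is to begin from the global invariants already known to be readable from $Q$ and then reconstruct $H$ step by step. Suppose $H$ is $Q$-equivalent to $T_{m,n}$ and write $N=m+n$. Since the order, the size and the number of components are determined by $Q$ (Proposition~\ref{prop:number}), and since $T_{m,n}$ is connected with $|V(T_{m,n})|=|E(T_{m,n})|=N$, the graph $H$ is connected with $|V(H)|=|E(H)|=N$; hence $H$ is a unicycle, i.e. a single cycle $C_g$ of some length $g$ (the girth) with rooted trees attached at its vertices. The goal is then to show that $g=n$, that every attached tree is a path, that exactly one of them is nontrivial, and that its size is $m$.

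Next I would read off the local structure from the column $q_{N-1,j}(H)$, that is, from single-vertex deletions. The basic fact for a unicycle is that deleting a non-cycle vertex of degree $d$ creates exactly $d$ components, while deleting a cycle vertex of degree $d$ creates exactly $d-1$ components (its two cycle edges stay in one piece). Since $q_{N-1,j}(H)=q_{N-1,j}(T_{m,n})=0$ for all $j\geq 3$, no non-cycle vertex can have degree $\geq 3$ and no cycle vertex can have degree $\geq 4$. Thus every attached tree is a path (a \emph{tail}) and every cycle vertex carries at most one tail. Writing $t$ for the number of nontrivial tails, the pendant vertices are exactly the $t$ far ends of the tails, and the degree-$2$ cycle vertices number $g-t$, so $q_{N-1,1}(H)=t+(g-t)=g$; comparing with $q_{N-1,1}(T_{m,n})=n$ gives $g=n$, and the total number of tail vertices is $N-n=m$.

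The real work, and the main obstacle, lies in showing $t=1$. The difficulty is that single-vertex deletions, and even the extremal part of two-vertex deletions, are blind to $t$: when two tails are attached at adjacent cycle vertices, one checks that the independence number and, for every fixed $s$, the maximum number of components obtainable by deleting $s$ vertices all coincide with the corresponding quantities for the tadpole, so no extremal argument can succeed. The device I would use instead is that the number of induced paths $P_3$, namely $\sum_{v}\binom{d(v)}{2}$, is recoverable from $Q$: counting triples of vertices by inclusion--exclusion gives
\[
q_{3,3}(H)=\binom{N}{3}-|E(H)|\,(N-2)+\sum_{v}\binom{d(v)}{2}-T,
\]
where $T$ is the number of triangles, which is determined by the girth ($T=1$ if $n=3$ and $T=0$ otherwise). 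Since $q_{3,3}$, $N$ and $|E(H)|$ are all determined by $Q$, so is $\sum_{v}\binom{d(v)}{2}$. For a cycle with $t$ path-tails the degree sequence consists of $t$ vertices of degree $3$, $t$ of degree $1$, and $N-2t$ of degree $2$, whence $\sum_{v}\binom{d(v)}{2}=N+t$; this pins down $t$, and comparison with $T_{m,n}$ (for which $t=1$) forces $t=1$. Therefore $H$ is a cycle $C_n$ with a single path of $m$ vertices attached, i.e. $H\cong T_{m,n}$, which is the desired $Q$-uniqueness. The crux is precisely this isolation of the number of tails, and the passage through the triangle-corrected count of independent triples appears to be the cleanest way to achieve it.
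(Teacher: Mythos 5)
Your proof is correct, and it takes a genuinely different route from the paper's --- in fact it supplies a step that the paper glosses over. The paper's proof is three lines: $H$ is a connected unicycle; then, ``similar to the proof of the Claim in Theorem~\ref{theorem:path},'' $H$ has at most one pendant vertex; then the tail length is read off from $[x^{m+n-1}y^2]Q(H;x,y)$. But the meet-vertex argument of that Claim only produces a vertex whose deletion leaves at least $3$ components when two pendant vertices hang off a common branch vertex (or the same cycle vertex); for a cycle with two path-tails attached at \emph{distinct} cycle vertices, every single-vertex deletion leaves at most $2$ components --- exactly the same profile as the tadpole, as you observe --- so the paper's analogy does not by itself exclude that family. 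Your argument closes precisely this hole: from $q_{N-1,j}=0$ for $j\geq 3$ you correctly deduce that $H$ is a cycle $C_g$ with $t$ path-tails, at most one per cycle vertex (deleting a tree vertex of degree $d$ creates $d$ components, a cycle vertex of degree $d$ creates $d-1$); from $q_{N-1,1}=g=n$ you pin down the girth and hence the total number $m$ of tail vertices; and then, instead of an extremal argument (which, as you rightly note, cannot see $t$), you use the counting identity $q_{3,3}=\binom{N}{3}-|E(H)|(N-2)+\sum_v\binom{d(v)}{2}-T$, whose signs are right (adjacent edge pairs enter positively, triangles negatively), together with the fact that $T$ is determined by the already-known girth, to recover $\sum_v\binom{d(v)}{2}=N+t$ and force $t=1$. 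All intermediate computations, including the degree sequence of a cycle with $t$ path-tails, check out. One terminological nit: $\sum_v\binom{d(v)}{2}$ counts all subgraphs isomorphic to $P_3$ (i.e.\ pairs of adjacent edges), not induced $P_3$'s; your formula uses it in the former, correct, sense. What the paper's route buys is brevity; what yours buys is an argument that actually withstands the two-tailed candidate counterexamples, plus the reusable observation that the statistic $\sum_v\binom{d(v)}{2}$ is reconstructible from $Q(G;x,y)$.
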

\begin{proof}
Let $H$ be $Q$-equivalent to $T_{m,n}$. Then $H$ is a 1-connected graph, $|V(H)|=|E(H)|=m+n$. It follows that $H$ is a unicycle. Similar to the proof of the Claim in Theorem~\ref{theorem:path}, we can prove that $H$ has  at most one pendant vertex. Since $H$ is a 1-connected graph, then $H$ can be constructed by joining one path to a cycle by a bridge. In addition, the order of the path is counted by $[x^{m+n-1}y^2]Q(H;x,y)$. This proves the theorem.
\end{proof}

A complete bipartite graph $K_{m,n}$ is a bipartite graph $(X,Y,E)$ such that for every two vertices $x \in X$ and $y \in Y$, $\{x,y\}$ is an edge in $K_{m,n}$.

\begin{theorem}
The complete bipartite graph $K_{m,n}$ is $Q$-unique.
\end{theorem}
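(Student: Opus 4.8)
The plan is to combine four invariants that the preceding results already extract from $Q(G;x,y)$: the order, the size, the connectivity, and the independence number. First I would recover the unordered pair $\{m,n\}$. By Proposition~\ref{prop:number} the order $|V(H)| = m+n$ and the size $|E(H)| = mn$ are determined by $Q$, and since $m+n$ and $mn$ are the two elementary symmetric functions of $m$ and $n$, the multiset $\{m,n\}$ is forced. I then assume without loss of generality that $m \le n$ (the degenerate case $K_{1,1} = K_2$ is complete and already covered by the proposition on complete graphs).

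Next I would pin down the two structural parameters. Since complete graphs are $Q$-unique and the non-degenerate $K_{m,n}$ is not complete, $H$ is not complete either, so Theorem~\ref{theorem:connectivity} applies and gives $c(H) = c(K_{m,n}) = \min(m,n) = m$. Using the inequality $c(H) \le \delta(H)$ noted in the introduction, this yields the crucial lower bound $\delta(H) \ge m$. By Proposition~\ref{prop:number}(4), the independence number satisfies $\alpha(H) = \alpha(K_{m,n}) = n$. Finally, $H$ is connected because the number of components is $Q$-determined.

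The heart of the argument is a degree-counting squeeze. I would fix a maximum independent set $S \subseteq V(H)$ with $|S| = \alpha(H) = n$ and set $T = V(H) \setminus S$, so that $|T| = m$. Because $S$ is independent, every vertex $v \in S$ has all of its neighbours in $T$, whence $d(v) \le |T| = m$; on the other hand $d(v) \ge \delta(H) \ge m$. Therefore $d(v) = m$ and $v$ is adjacent to \emph{every} vertex of $T$, for each $v \in S$. This produces $|S|\,|T| = mn$ edges between $S$ and $T$, which already exhausts the total of $mn$ edges in $H$; consequently $T$ spans no edges and is itself independent. Hence $H$ is bipartite with independent parts $S$ and $T$ and is complete across them, i.e. $H \cong K_{m,n}$.

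I expect the only real content to be the realization that connectivity must be invoked to convert the $Q$-data into the minimum-degree bound $\delta(H) \ge m$: once that is available, a maximum independent set of size $n$ leaves exactly $m$ vertices to absorb all of its neighbours, and the two opposing inequalities collapse to equality, forcing the complete bipartite structure. The routine case checking (for instance $m = n$, where there may be two maximum independent sets, though any one suffices) presents no obstacle.
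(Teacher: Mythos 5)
Your proof is correct, but the key mechanism is genuinely different from the paper's. The paper extracts the same basic invariants (order $m+n$, size $mn$, independence number $\max(m,n)$), yet it never actually uses connectivity in the argument: instead it reads off one extra coefficient of the polynomial, namely $\deg_y\left[x^{\max(m,n)+1}\right]Q(K_{m,n};x,y)=1$ (every induced subgraph of $K_{m,n}$ on $\max(m,n)+1$ vertices is connected), and then shows by contradiction that every vertex outside a maximum independent set $X$ is adjacent to all of $X$ --- otherwise $X$ together with a non-neighbour would induce a disconnected subgraph on $\max(m,n)+1$ vertices, making that coefficient have $y$-degree at least $2$. You obtain the same adjacency conclusion by a different route: the $Q$-determined connectivity $c(H)=\min(m,n)$ combined with $c(H)\leqslant\delta(H)$ gives $\delta(H)\geqslant\min(m,n)$, and a degree squeeze on the vertices of the maximum independent set forces each of them to be joined to the entire complement. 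Both proofs then finish identically: the $mn$ cross edges exhaust $|E(H)|=mn$, so the complement of the independent set is itself independent and $H\cong K_{m,n}$. Your version is attractive because it uses only the four named invariants, so it in fact proves the stronger statement that any graph sharing the order, size, connectivity and independence number of $K_{m,n}$ is isomorphic to it; the cost is a dependence on Theorem~\ref{theorem:connectivity} (and on the $Q$-uniqueness of complete graphs to justify invoking it), whereas the paper's coefficient argument is self-contained modulo Proposition~\ref{prop:number} and illustrates the useful technique of mining individual coefficients of $Q$ rather than only its derived invariants.
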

\begin{proof}
Let $H$ be a graph with the same subgraph component polynomial as $K_{m,n}$, for some $m\geqslant n$. Then $H$ is a $n$-connected graph, $|H|=m+n$, $|E(H)|=mn$, $\alpha(H)=m$. For each vertex subset $A$ of cardinality $m+1$, $K_{m,n}[A]$ is connected, then $deg_y[x^{m+1}]Q(H;x,y)=deg_y[x^{m+1}]Q(K_{m,n};x,y)=1$. Let $X=\{x_1,x_2,\dots,x_m\}$ be a maximum independent set of $H$, $Y=V(H)\backslash X=\{y_1,y_2,\dots,y_m\}$. We claim that every vertex in $X$ adjacent to every vertex in $Y$. If not, we can assume that there are two vertices $x_i \in X$, $y_j \in Y$ such that $\{x_i,y_j\} \notin E(H)$. Let $Z=X\cup \{y_j\}$. Then $|Z|=m+1$ and $k(H[Z]) \geqslant 2$ which leads to $deg_y[x^{m+1}]Q(H;x,y) \geqslant 2$, a contradiction. Therefore, $|E(X,Y)|=mn=|E(H)|$. Hence $Y$ is an independent set of $H$, so $H\cong K_{m,n}$.
\end{proof}

\subsection{Friendship graphs, book graphs}
The friendship graph $C_3^n$ can be constructed by joining $n$ copies of the cycle graph $C_3$ with a common vertex $u$, see Fig.~\ref{friendship}. Wang $et$ $al$. found that the friendship graph $C_3^n$ can be determined by the signless Laplacian spectrum in \cite{Wang10}.
\begin{figure}[h]
\begin{center}
\includegraphics[width=0.3\linewidth]{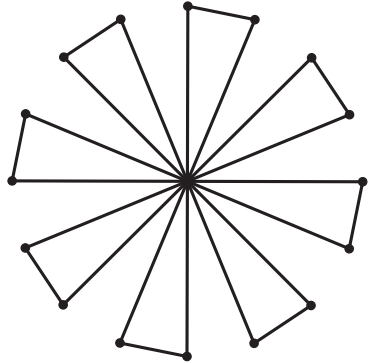}
\end{center}
\caption{The friendship graph $C_3^8$.}
\label{friendship}
\end{figure}
\begin{theorem}\label{theorem:friend}
The friendship graph $C_3^n$ is $Q$-unique.
\end{theorem}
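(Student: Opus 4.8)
The plan is to reconstruct $C_3^n$ from $Q$ in three stages: fix the gross parameters, locate the hub vertex together with the block decomposition, and then force every block to be a triangle. From $Q(H)=Q(C_3^n)$ I first read off that $H$ is connected with $|V(H)|=2n+1$, $|E(H)|=3n$ and $\alpha(H)=n$ (Proposition~\ref{prop:number}), and that $c(H)=1$ (Theorem~\ref{theorem:connectivity}). To pin down the structure I would use the coefficient $[x^{2n}]Q=\sum_{v}y^{k(H-v)}$, which records the multiset $\{k(H-v):v\in V(H)\}$; for $C_3^n$ it equals $2n\,y+y^{n}$. Hence $H$ has a unique vertex $u$ with $k(H-u)=n$, while $k(H-v)=1$ for every other $v$. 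Thus $u$ is the only cut vertex, every block of $H$ contains $u$, distinct blocks meet only in $u$, and there are exactly $n$ blocks $B_1,\dots,B_n$.

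Next I would exploit $\alpha(H)=n$. Since distinct blocks share only $u$, an independent set avoiding $u$ splits over the blocks, so $\alpha(H)\ge\sum_i\alpha(B_i-u)\ge n$; equality forces $\alpha(B_i-u)=1$, i.e. each $B_i-u$ is a clique $K_{q_i}$, with $u$ joined to $d_i$ of its vertices ($d_i\ge 2$ whenever $q_i\ge 2$ by $2$-connectivity of a block, and $(q_i,d_i)=(1,1)$ for a pendant block), and $\sum_i q_i=2n$. I would also record two $Q$-determined quantities obtained exactly as in the proof of Theorem~\ref{theorem:4cycle}: the number of triangles $\sum_i\bigl(\binom{q_i}{3}+\binom{d_i}{2}\bigr)=n$, and the number of paths $P_3$, equivalently $\sum_{v}\binom{d_v}{2}$, whence $\sum_v d_v^2=4n^2+8n$.

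The decisive step is a per-block inequality. Writing $t_i=\binom{q_i}{3}+\binom{d_i}{2}$ for the number of triangles in $B_i$, one checks that $t_i-1\ge q_i-2$ for every admissible block, with equality if and only if $B_i$ is a pendant edge, a triangle, or the diamond $K_4-e$ (the case $(q_i,d_i)=(3,2)$). Summing over $i$ and using $\sum_i(t_i-1)=0=\sum_i(q_i-2)$ forces equality in every block, so $H$ consists of $n-2k$ triangles, $k$ diamonds and $k$ pendant edges glued at $u$, for some $k\ge 0$. A direct computation then gives $\sum_v d_v^2=4n^2+8n+k(k+7-4n)$; since $k\le n/2$ the correction is negative once $k\ge 1$, so the value $4n^2+8n$ forces $k=0$, i.e. $H\cong C_3^n$.

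I expect the main obstacle to be precisely this last stage. The coarse invariants do not suffice: for each $k$ the graph $G_k$ above (e.g. one diamond, one pendant edge and $n-2$ triangles sharing $u$) agrees with $C_3^n$ in order, size, number of components, independence number, connectivity, and even the triangle count, and is separated only by the finer second-neighbourhood datum $\sum_v d_v^2$. So the real content is twofold: extracting $\sum_v\binom{d_v}{2}$ from $Q$, and spotting the clean inequality $\binom{q_i}{3}+\binom{d_i}{2}-1\ge q_i-2$ that collapses the otherwise unwieldy system of block equations to the one-parameter family $G_k$. An alternative high-level route is via the Friendship Theorem: the identity $\sum_v\binom{d_v}{2}=\binom{2n+1}{2}$ says that the average number of common neighbours over all vertex pairs is exactly $1$, and if one could show $H$ is $C_4$-free then every pair would have exactly one common neighbour; the obstruction there is establishing $C_4$-freeness, which Theorem~\ref{theorem:4cycle} does not deliver since $C_3^n$ is neither regular nor bipartite.
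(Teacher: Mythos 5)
You take a genuinely different route from the paper. The paper never looks at blocks or degrees: it reads the hub $u$ and the $n$ components of $H-u$ off $[x^{2n}y^n]Q=1$, forces every component to have exactly two vertices using $[x^{2n-1}y^{n-1}]Q=0$, and excludes pendant vertices using $[x^{2n}y^2]Q=0$, so each component together with $u$ is a triangle. Your proof instead recovers the full block structure from the multiset $\{k(H-v)\}$, turns $\alpha(H)=n$ into the statement that every block is a clique plus a partially attached hub, classifies the admissible blocks by the inequality $\binom{q_i}{3}+\binom{d_i}{2}\ge q_i-1$, and eliminates the residual family $G_k$ by the degree datum $\sum_v d_v^2$. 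Note that the family $G_k$ you must work to kill is exactly what the paper's single vanishing coefficient $q_{2n-1,n-1}(C_3^n)=0$ excludes in one line (removing $u$ and a pendant vertex of $G_k$ leaves $2n-1$ vertices in $n-1$ components); your route is longer but it makes the near-counterexamples visible.

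There are two concrete problems. First, the triangle count and $\sum_v\binom{d_v}{2}$ cannot be obtained ``exactly as in the proof of Theorem~\ref{theorem:4cycle}'': that argument is specific to regular bipartite graphs --- bipartiteness is what makes $[x^3y]Q$ count only the $K_{1,2}$'s, and regularity is what makes the $K_{1,3}$-contribution computable --- and your $H$ is neither. The facts you need do hold for every graph, but you must prove them, for instance by the double count
\[
|E|\,\bigl(|V|-2\bigr)=q_{3,2}+2\,p^{\mathrm{ind}}+3t,\qquad q_{3,1}=p^{\mathrm{ind}}+t,
\]
where $t$ is the number of triangles and $p^{\mathrm{ind}}$ the number of induced paths on three vertices; hence $t=|E|(|V|-2)-q_{3,2}-2q_{3,1}$ and $\sum_v\binom{d_v}{2}=p^{\mathrm{ind}}+3t=q_{3,1}+2t$ are $Q$-determined. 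With this lemma inserted, your block analysis goes through for $n\ge 3$.

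Second, your final claim that ``the correction is negative once $k\ge 1$'' is false at $n=2$: there $k\le n/2$ permits $k=1$, and $k(k+7-4n)=1\cdot 0=0$, so $\sum_v d_v^2$ does not rule out $G_1$. This is not a repairable slip, because the theorem itself fails for $n=2$: your graph $G_1$ (a diamond $K_4-e$ and a pendant edge glued at $u$) has the same subgraph component polynomial as the bowtie $C_3^2$ --- both have $q_{2,1}=6$, $q_{2,2}=4$, $q_{3,1}=6$, $q_{3,2}=4$, $q_{4,1}=4$, $q_{4,2}=1$, $q_{5,1}=1$ --- and the two graphs are not isomorphic. (The paper's own proof breaks at the same place: its claim that $C_3^n$ has no induced subgraph with $2n-1$ vertices and $n-1$ components is false precisely when $n=2$.) So your proof, like the paper's, is valid only for $n\ge 3$, and your family $G_k$ is exactly what exhibits the $n=2$ counterexample; this should be stated explicitly rather than hidden behind the incorrect sign claim.
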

\begin{proof}
Let $H$ be $Q$-equivalent to $C_3^n$. Then $H$ is a 1-connected graph, $|V(H)|=2n+1$, $|E(H)|=3n$, $\alpha(H)=n$.
  Since $[x^{2n}y^n]Q(H;x,y)=[x^{2n}y^n]Q(C_3^n;x,y)=1$, then there is a subgraph of $H$ with $2n$ vertices and $n$ components. We denote these components by $H_1,H_2,\cdots,H_n$. As $H$ is connected, there must be a vertex $u$ in $H$ such that $u$ is connected to each of  above $n$ components.

 {\bf Claim} Each component $H_i$ have exactly two vertices.

  \begin{proof}
We first claim that $|H_i|\geqslant 2$ for each $i$. If there exist $j$ such that $|H_j|=1$, let $H_j=\{x\}$. Then $V(H)\backslash \{u,x\}$ induces a subgraph in $H$ with $2n-1$ vertices and $n-1$ components; but such a subgraph does not exist in $C_3^n$. Since the total number of vertices is $2n+1$, we have $|H_i|=2$ for each $i$.
 \end{proof}

 For each $i$, let $V(H_i)=\{x_i,y_i\}$.   We first prove that neither $x_i$ nor $y_i$ is a pendant vertex. We suppose $x_i$ is a pendant vertex in $H$, then $y_i$ is the unique neighbor vertex of $x_i$ for $x_i,y_i$ constitute a component. Then $V(H)\backslash y_i$ induces a subgraph in $H$ has $2n$ vertices and $2$ components; but such a subgraph does not exist in $C_3^n$. Therefore $\{x_i,y_i,u\}$ introduce a cycle $C_3$ in $H$ and we complete the proof.
\end{proof}

The $n$-book graph $B_n$ can be constructed by joining $n$ copies of the cycle graph $C_4$ with a common edge $\{u,v\}$, see Fig.~\ref{book}.
\begin{figure}[h]
\begin{center}
\includegraphics[width=0.5\linewidth]{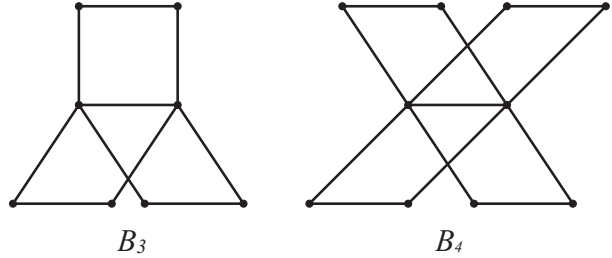}
\end{center}
\caption{The book graphs $B_3$ and $B_4$.}
\label{book}
\end{figure}
\begin{theorem}
The book graph $B_n$ is $Q$-unique.
\end{theorem}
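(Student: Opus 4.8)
The plan is to follow the template used for the friendship graph (Theorem~\ref{theorem:friend}) and the complete bipartite graph, replacing the single apex by a two-vertex \emph{spine} $\{u,v\}$, but to add one new ingredient that turns out to be decisive: the number of triangles, and more generally the second moment of the degree sequence, is itself recoverable from $Q$. First I would record the invariants of any $H$ that is $Q$-equivalent to $B_n$. By Proposition~\ref{prop:number}, $H$ has $2n+2$ vertices, $3n+1$ edges and independence number $n+1$; and since deleting the two spine vertices of $B_n$ leaves $n$ components while $B_n$ is plainly $2$-connected, Theorem~\ref{theorem:connectivity} gives $c(H)=2$, so $H$ is $2$-connected and $\delta(H)\geqslant 2$.

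Next I would prove the auxiliary determinacy statement. Counting, for each edge of $G$ together with a third vertex, whether that vertex is adjacent to neither, exactly one, or both endpoints, one obtains
\[
t(G)=|E(G)|\bigl(|V(G)|-2\bigr)-[x^{3}y^{2}]Q(G;x,y)-2\,[x^{3}y]Q(G;x,y),
\]
so the triangle number $t(G)$ is $Q$-invariant; since $B_n$ is triangle-free, so is $H$. Moreover $[x^{3}y]Q$ counts connected triples, which for a triangle-free graph equals $\sum_{v}\binom{d(v)}{2}$, whence the second moment $\sum_{v}d(v)^{2}$ of the degree sequence of $H$ agrees with that of $B_n$. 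I would then locate the spine: a direct check gives $[x^{2n}y^{n}]Q(B_n;x,y)=1$, the unique $2n$-subset inducing $n$ components being the complement of $\{u,v\}$ (any subset keeping a spine vertex merges pages, and no other pair of deletions splits $B_n$ into $n$ pieces). Hence $H$ has a unique $2n$-subset $W$ with $k(H[W])=n$; set $\{u,v\}=V(H)\setminus W$ and let $H_1,\dots,H_n$ be the components of $H[W]$. Because $H-u$ and $H-v$ are connected, each of $u,v$ sends an edge into every $H_i$, giving at least $2n$ spine-to-$W$ edges, while $H[W]$ has at least $n$ edges.

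In the principal case these bounds are tight: comparing with $|E(H)|=3n+1$ forces $uv\in E(H)$, exactly one $u$-edge and one $v$-edge into each $H_i$, and $H[W]$ a forest. Triangle-freeness now finishes the structure. If some $H_i$ were a single vertex $w$, then $w\sim u$, $w\sim v$ together with $uv$ would form a triangle; so every component has at least two vertices, and as the $n$ components share $2n$ vertices each is a single edge $\{a_i,b_i\}$. For the same reason the $u$-edge and $v$-edge into $\{a_i,b_i\}$ must meet different endpoints, so $u-a_i-b_i-v-u$ is a $4$-cycle; the edge budget is then exhausted, every $a_i,b_i$ has degree $2$ and both spine vertices have degree $n+1$, so $H\cong B_n$.

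The main obstacle is showing that we are indeed in this principal case, i.e.\ excluding the ways the surplus edge could instead sit inside $W$ or give an extra spine attachment (in particular configurations with $uv\notin E(H)$). Such graphs can share the order, size, independence number and connectivity of $B_n$, so the basic invariants do not separate them; the extra leverage is the second-moment invariant of the previous step. I would argue that every non-principal attachment creates a vertex of degree $3$ (or otherwise alters the degree distribution), which changes $\sum_{v}d(v)^{2}$ and hence $[x^{3}y]Q$, contradicting $Q$-equivalence with the triangle-free $B_n$. A secondary nuisance is that the streamlined vanishing $[x^{2n-1}y^{n-1}]Q(B_n;x,y)=0$, convenient for the singleton step, fails for a few small $n$; those finitely many base cases I would settle by direct computation.
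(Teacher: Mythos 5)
Your proposal is correct in outline and takes a genuinely different route from the paper's at the two decisive points. Both arguments share the same skeleton: recover order, size, independence number and $2$-connectedness; use $[x^{2n}y^{n}]Q(B_n;x,y)=1$ to locate the spine $\{u,v\}$ and the components $H_1,\dots,H_n$ of $H-\{u,v\}$; and use $2$-connectivity to force one $u$-edge and one $v$-edge into each $H_i$. The paper then (a) establishes $|H_i|=2$ by transferring the friendship-graph claim, and (b) kills the surplus-attachment configurations with the coefficient $[x^{n+1}y^{n+1}]Q(B_n;x,y)=2$, which counts maximum independent sets: an extra edge $\{u,y_k\}$ forces $uv\notin E(H)$ by the edge budget and then leaves only one independent set of size $n+1$, a contradiction. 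You instead manufacture two new $Q$-determined invariants: the triangle number, via the identity $t(G)=|E|\,(|V|-2)-[x^{3}y^{2}]Q-2[x^{3}y]Q$ (which is correct: classify pairs consisting of an edge and a third vertex by how many edges the triple induces), and then, since $B_n$ and hence $H$ is triangle-free, the degree second moment via $[x^{3}y]Q=\sum_v\binom{d(v)}{2}$. Triangle-freeness neatly replaces step (a) in the case $uv\in E(H)$ (singleton components and same-endpoint attachments would create triangles through $uv$), and the second moment replaces step (b). That last exclusion, which you leave at the level of ``I would argue,'' does check out: the three possible configurations with $uv\notin E(H)$ give $\sum_v d(v)^2$ equal to $2n^2+10n+6$, $2n^2+8n+12$, or $2n^2+8n+10$, against $2n^2+12n+2$ for $B_n$, so all are ruled out once $n\geqslant 3$. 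Be aware, though, that at $n=2$ the discrepancy vanishes: the triangle-free $2$-connected graph with edges $uw,vw,ux,uz,vy,xy,yz$ matches $B_2$ in order, size, independence number, triangle count and degree second moment, so the finite verification you promise is needed exactly there --- and for this reason, not the one you state (the vanishing of $[x^{2n-1}y^{n-1}]Q$ plays no role in your principal-case argument). What your route buys: two further answers to Problem 1.2 (the triangle number is $Q$-determined for all graphs, and the degree second moment for triangle-free ones), plus independence from the friendship-graph claim, whose verbatim transfer to $B_n$ is delicate because $(2n-1)$-subsets inducing $n-1$ components do exist in $B_n$ for small $n$. What it costs: an explicit three-case edge-distribution analysis and a separate check at $n=2$, neither of which the paper's independent-set argument requires.
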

\begin{proof}
Let $H$ be a graph $Q$-equivalent to $B_n$. Then $H$ is a 2-connected graph with $2n+2$ vertices and $3n+1$ edges and $\alpha(H)=n+1$. According the special structure of $B_n$, the following two equations holds:

\begin{equation}\label{equation:book1}
[x^{2n}y^n]Q(H;x,y)=[x^{2n}y^n]Q(B_n;x,y)=1,
\end{equation}
\begin{equation}\label{equation:book2}
[x^{n+1}y^{n+1}]Q(H;x,y)=[x^{n+1}y^{n+1}]Q(B_n;x,y)=2.
\end{equation}
It follows from Equation~(\ref{equation:book1}) that there are two vertices $u$ and $v$ in $V(H)$ such that $V(H)\setminus \{u,v\}$ induces a subgraph of $H$ with $2n$ vertices and $n$ components. We denote these components by $H_1,\cdots,H_n$. Similar to the proof for the Claim in Theorem~\ref{theorem:friend} we can show $H_i$ has exactly two vertices $x_i,y_i$, for each $i$. $H$ is a $2$-connected graph implies that for each $i$, there is one vertex in $H_i$ adjacent to $u$ and another vertex in $H_i$ adjacent to $v$. Without loss of generality we let $x_i$ adjacent to $u$ and $y_i$ adjacent to $v$. We suppose there is a vertex $y_k$ in the component $H_k$ such that $\{u,y_k\} \in H$. Since $H$ has exactly $3n+1$ edges, then $\{u,v\} \notin H$ and $\{v,x_i\} \notin H$ for each $i$. Therefore, $[x^{n+1}y^{n+1}]Q(H;x,y)=1$ which is a contradiction with equation~(\ref{equation:book2}). Hence, $\{u,y_i\} \notin H_i$ for each $i$. Analogously, $\{v,x_i\} \notin H_i$ for each $i$. So $ \{x_i,y_i,u,v\}$ constitute a cycle $C_4$ for each $i$ and we have finished the proof.
\end{proof}
\subsection{Hypercubes}
The \emph{n-cube} $Q^n$ is defined as the product of $n$ copies of $K_2$. Cubes have been extensively studied  in computer science, and the following two lemma for the case of vertex fault were shown in \cite{Esfahanian89,Yang04}.
\begin{lemma}\label{lemma:cube1}\cite{Esfahanian89,Yang04}
Let $F$ be a set of at most $2n-3$ vertices in $Q^n$. If $N(u) \varsubsetneq F$ for each vertex $u$ in $Q^n$, then $Q^n-F$ is connected.
\end{lemma}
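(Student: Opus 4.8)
The final statement is Lemma~\ref{lemma:cube1}, a known connectivity result about the hypercube $Q^n$ that the authors cite from \cite{Esfahanian89,Yang04}. Since it is quoted rather than reproved here, the natural task is to reconstruct the standard argument establishing fault-tolerance of $Q^n$ under vertex deletion.

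The plan is to argue by induction on $n$, exploiting the recursive structure $Q^n = Q^{n-1} \square K_2$. First I would split $Q^n$ into its two copies $Q^{n-1}_0$ and $Q^{n-1}_1$ of $Q^{n-1}$, joined by a perfect matching $M$ along the last coordinate. Writing $F_0 = F \cap V(Q^{n-1}_0)$ and $F_1 = F \cap V(Q^{n-1}_1)$, I would distinguish cases according to how the budget $|F|\le 2n-3$ is distributed. In the balanced case, where both $|F_0|$ and $|F_1|$ are at most $2(n-1)-3 = 2n-5$, the inductive hypothesis makes each $Q^{n-1}_i - F_i$ connected; one then checks that at least one matching edge of $M$ survives with both endpoints present, so the two connected subcubes are joined and $Q^n - F$ is connected. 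Counting the surviving matching edges is where one uses that $|F|$ is small relative to the $2^{n-1}$ available matching edges.

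The delicate part is the \emph{unbalanced} case, where almost all faulty vertices lie in one subcube, say $|F_1|\ge 2n-4$ and correspondingly $|F_0|\le 1$. Here the inductive hypothesis need not apply to $Q^{n-1}_1$, so I would instead argue that $Q^{n-1}_0 - F_0$ is connected (easy, since it loses at most one vertex and $Q^{n-1}$ is $(n-1)$-connected), and then show that every surviving vertex of $Q^{n-1}_1$ can be connected into $Q^{n-1}_0 - F_0$ through its matching partner. This is exactly the point where the hypothesis $N(u)\not\subseteq F$ for every vertex $u$ becomes essential: it forbids any surviving vertex from being totally isolated by the fault set, and it rules out the sole obstruction to connectivity that a pure $2n-3$ bound would otherwise permit (namely a single vertex whose entire neighborhood of size $n$ is deleted). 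I expect this case analysis — verifying that no surviving vertex in the heavily damaged subcube is cut off, given both the size bound and the no-enclosed-neighborhood condition — to be the main obstacle.

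For the base case I would take $n$ small enough that $2n-3 \le 0$, so the statement is vacuous (for $n \le 1$), and separately verify $n=2$ directly, where $Q^2 = C_4$ and $2n-3 = 1$: deleting a single vertex leaves $P_3$, which is connected, and the hypothesis $N(u)\not\subseteq F$ is automatic since $|F|=1 < 2 = \deg(u)$. Throughout, the key numerical inputs are that $Q^n$ is $n$-regular and $n$-connected and has $2^n$ vertices, so the faulty set of size at most $2n-3$ is small compared with the connectivity structure; the induction leverages precisely the gap between $2n-3$ and the per-subcube threshold $2(n-1)-3$.
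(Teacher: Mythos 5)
First, note that the paper itself does not prove this lemma: it is quoted with citations to \cite{Esfahanian89,Yang04}, so there is no in-paper argument to compare against, and your reconstruction must stand on its own. It does not, quite: the balanced case contains a genuine gap. You claim that when $|F_0|,|F_1|\leqslant 2(n-1)-3$ ``the inductive hypothesis makes each $Q^{n-1}_i-F_i$ connected.'' But the inductive hypothesis is the full lemma, including the condition that no vertex of the subcube has its \emph{entire subcube neighborhood} inside $F_i$ --- and that condition does not descend from $Q^n$ to its subcubes. A vertex $u\in Q^{n-1}_0$ has only $n-1$ neighbors inside $Q^{n-1}_0$; the hypothesis $N_{Q^n}(u)\not\subseteq F$ may be satisfied solely because $u$'s matching partner in $Q^{n-1}_1$ survives, while all $n-1$ of its neighbors inside $Q^{n-1}_0$ lie in $F_0$. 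Since $n-1\leqslant 2n-5$ for $n\geqslant 4$, this is perfectly compatible with your balanced-case bound: e.g.\ for $n=4$ take $F_0=N_{Q^3_0}(u)$ (three vertices) and $F_1$ small. Then $Q^{n-1}_0-F_0$ is \emph{disconnected} ($u$ is isolated in it), so the step as written fails.

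The standard repair, and essentially what \cite{Yang04} does, is to induct on a \emph{stronger} statement --- with at most $2n-3$ faults, $Q^n-F$ has at most two components, and any second component is a single vertex whose whole neighborhood lies in $F$ (this is the paper's Lemma~\ref{lemma:cube3} sharpened to identify the small component). With that hypothesis the balanced case goes through: each $Q^{n-1}_i-F_i$ is a large component plus possibly one isolated vertex; the two large components are joined because more than $|F|$ matching edges survive; and an isolated vertex $u$ of $Q^{n-1}_0-F_0$ is rescued by its matching partner, which cannot itself be isolated in $Q^{n-1}_1-F_1$ since that would force $|F|\geqslant (n-1)+(n-1)=2n-2>2n-3$. (Your unbalanced case, by contrast, is essentially sound: when $|F_0|\leqslant 1$, a surviving vertex $v$ of $Q^{n-1}_1$ whose partner is the unique fault of $F_0$ has, by hypothesis, a surviving neighbor $w$ in $Q^{n-1}_1$, and $w$'s partner survives, giving the path $v\text{--}w\text{--}w'$ into $Q^{n-1}_0-F_0$.) So the skeleton is right, but without strengthening the induction hypothesis the argument collapses exactly where you asserted it was routine.
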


\begin{lemma}\label{lemma:cube2}\cite{Yang04}
Let $u$ be a vertex in $Q^n$. Then $c(Q^n-N[u])=n-2$.
\end{lemma}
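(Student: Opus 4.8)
The final statement is Lemma~\ref{lemma:cube2}, which asserts that for any vertex $u$ in the $n$-cube, $c(Q^n - N[u]) = n-2$. Since this is cited from the literature (Yang \cite{Yang04}), I would treat it as a known result, but here is how I would reconstruct a proof plan from scratch.

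The plan is to first understand the structure of $Q^n - N[u]$. By vertex-transitivity of $Q^n$, I may assume $u = 00\cdots0$. Then $N[u]$ consists of $u$ together with the $n$ weight-one vertices, so $Q^n - N[u]$ is the subgraph induced on all binary strings of Hamming weight at least $2$. The first step is to establish the upper bound $c(Q^n - N[u]) \leqslant n-2$ by exhibiting a separating set of that size. A natural candidate: fix a weight-two vertex $w$ and consider its neighbors within the remaining graph. A weight-two vertex $w$ has $n$ neighbors in $Q^n$, namely the two weight-one strings (which have been removed) and the $n-2$ weight-three strings obtained by flipping a zero to a one. Thus inside $Q^n - N[u]$, the vertex $w$ has degree exactly $n-2$, and deleting its $n-2$ surviving neighbors isolates $w$. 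This gives a separating set of size $n-2$, hence $c \leqslant n-2$.

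For the lower bound $c(Q^n - N[u]) \geqslant n-2$, I would argue that no set $F$ of fewer than $n-2$ vertices can disconnect the graph on weight-$\geqslant 2$ strings. Here I expect to lean directly on Lemma~\ref{lemma:cube1}: if $F$ is a small vertex set in $Q^n$ that does not contain the full neighborhood of any vertex, then $Q^n - F$ stays connected. The idea would be to take the removed neighborhood $N[u]$ together with a hypothetical separating set $S$ of size at most $n-3$ inside $Q^n - N[u]$, and show that their union still fails to contain any complete neighborhood $N(v)$, so that by Lemma~\ref{lemma:cube1} the graph $Q^n - (N[u]\cup S)$ is connected, contradicting that $S$ separates. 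Making this reduction precise requires checking that $|N[u] \cup S|$ and the non-containment hypothesis of Lemma~\ref{lemma:cube1} are met; since $|N[u]| = n+1$, a direct application needs care because $n+1+(n-3) = 2n-2$ exceeds the bound $2n-3$ in Lemma~\ref{lemma:cube1}, so I would instead apply the connectivity of $Q^n$ restricted to the weight-$\geqslant 2$ layer more delicately, or use a path-rerouting argument showing every pair of weight-$\geqslant 2$ vertices has at least $n-2$ internally disjoint paths avoiding the deleted low-weight strings.

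The main obstacle I anticipate is precisely this lower bound bookkeeping: the naive union of $N[u]$ with a small cut is too large to feed into Lemma~\ref{lemma:cube1} verbatim, so the real work is to show directly that within the layer of weight-$\geqslant 2$ vertices, any two vertices are joined by $n-2$ internally vertex-disjoint paths (a Menger-type statement), which would immediately yield $(n-2)$-connectivity. I would construct these disjoint paths by routing through coordinate flips while keeping every intermediate string of weight at least two, using the product structure $Q^n = Q^{n-1} \times K_2$ to set up an induction on $n$. The base cases (small $n$, where $n-2$ is $0$ or $1$) should be verified by hand, and the inductive step should splice paths from the two $Q^{n-1}$-subcubes together through the perfect matching connecting them.
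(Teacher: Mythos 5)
The paper offers no proof of this lemma at all: it is imported verbatim from \cite{Yang04}, so there is no internal argument to compare yours against, and your reconstruction must be judged on its own merits. Your upper bound is complete and correct: with $u=0\cdots 0$, a weight-two vertex of $Q^n-N[u]$ has exactly $n-2$ surviving neighbours (all of weight three), so deleting them isolates it, giving $c(Q^n-N[u])\leqslant n-2$ for $n\geqslant 3$. For the lower bound you are right that Lemma~\ref{lemma:cube1} cannot be invoked, and in fact the obstruction is more basic than the size arithmetic you point out: any set of the form $F=N[u]\cup S$ contains $N(u)$ outright, so the non-containment hypothesis of Lemma~\ref{lemma:cube1} fails at the vertex $u$ no matter how small $S$ is. Your fallback plan, induction on $n$ via $Q^n=Q^{n-1}\times K_2$, does close, and can be made precise as follows. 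Split along any coordinate, writing $Q^n=Q_0\cup Q_1$ with $u\in Q_0$; then $N[u]$ meets $Q_1$ in the single vertex $u'$ matched to $u$. Given $S$ with $|S|\leqslant n-3$, put $S_i=S\cap Q_i$. Since $Q_1\cong Q^{n-1}$ is $(n-1)$-connected and $|S_1|+1\leqslant n-2<n-1$, the piece $Q_1-u'-S_1$ is always connected. If $|S_0|\leqslant n-4$, the induction hypothesis $c(Q^{n-1}-N[u])=n-3$ makes $Q_0-N_{Q_0}[u]-S_0$ connected, and it stays attached to the $Q_1$-piece because the number of its surviving vertices, at least $2^{n-1}-2n+4$, exceeds $|S_1|\leqslant n-3$ for $n\geqslant 4$, so some surviving vertex keeps its matching partner (partners have weight $\geqslant 3$ and hence were not removed with $N[u]$). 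If instead $|S_0|=n-3$, then $S_1=\emptyset$ and every surviving $Q_0$-vertex keeps its partner, so again everything attaches to the connected $Q_1$-piece. The base case $n=3$ is exactly your observation that $Q^3-N[u]\cong K_{1,3}$ has connectivity $1$. So your approach is sound and the anticipated obstacle is surmountable exactly as you predicted; the only criticism is that you left this decisive lower-bound induction as a sketch rather than executing it, though even so your reconstruction goes beyond what the paper itself provides, which is only a citation.
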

Lemma~\ref{lemma:cube1} and \ref{lemma:cube2} imply that in order to get a subgraph of $Q^n$ with more than two components, at least $2n-2$ vertices should be deleted.
\begin{lemma}\label{lemma:cube3}
Let $F$ be a set of at most $2n-3$ vertices in $Q^n$. Then $k(Q^n-F) \leqslant 2$.
\end{lemma}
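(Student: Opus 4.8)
The plan is to bootstrap from Lemmas~\ref{lemma:cube1} and~\ref{lemma:cube2}, exactly along the lines of the remark preceding the statement. Write $F$ for the deleted set, so $|F|\leqslant 2n-3$, and call a surviving vertex $u\in V(Q^n)\setminus F$ \emph{isolated} in $Q^n-F$ if $N(u)\subseteq F$. First I would dispose of the case where $Q^n-F$ has no isolated vertex: then $N(u)\not\subseteq F$ for every surviving $u$, so Lemma~\ref{lemma:cube1} applies and $Q^n-F$ is connected, giving $k(Q^n-F)=1\leqslant 2$. Hence it suffices to analyse the situation in which at least one isolated vertex occurs.

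The second step is to show that \emph{at most one} isolated vertex can occur when $|F|\leqslant 2n-3$. Suppose $u_1\neq u_2$ were both isolated, so $N(u_1)\cup N(u_2)\subseteq F$. Since $u_2\notin F\supseteq N(u_1)$, the vertices $u_1,u_2$ are non-adjacent, hence lie at Hamming distance at least $2$ in $Q^n$. The key combinatorial fact about the cube is that two vertices at distance at least $2$ share at most two common neighbours (exactly two at distance $2$, and none at distance $\geqslant 3$, since a common neighbour would force a path of length $2$ between them). Consequently $|N(u_1)\cup N(u_2)|=2n-|N(u_1)\cap N(u_2)|\geqslant 2n-2$, and as this set lies in $F$ we get $|F|\geqslant 2n-2>2n-3$, a contradiction.

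Finally, suppose there is exactly one isolated vertex $u$. Then $N(u)\subseteq F$, so setting $F'=F\setminus N(u)$ gives $|F'|\leqslant (2n-3)-n=n-3$, and $Q^n-F$ decomposes as the single vertex $u$ together with $(Q^n-F)-u=(Q^n-N[u])-F'$. By Lemma~\ref{lemma:cube2} the graph $Q^n-N[u]$ is $(n-2)$-connected, and since $|F'|\leqslant n-3<n-2$, deleting $F'$ cannot disconnect it; hence $(Q^n-N[u])-F'$ is connected. Thus $Q^n-F$ consists of the isolated vertex $u$ and one further connected piece, so $k(Q^n-F)=2$. Combining the three cases yields $k(Q^n-F)\leqslant 2$.

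I expect the only delicate point to be the second step, the bound on the number of isolated vertices: it is where the specific geometry of $Q^n$ (the common-neighbour count) enters, and it is precisely what pins the threshold at $2n-3$ rather than something weaker. The small cases $n\leqslant 2$, where $2n-3\leqslant 1$, are immediate, since deleting at most one vertex from $Q^n$ never disconnects it; for $n\geqslant 3$ the value $n-2$ in Lemma~\ref{lemma:cube2} is positive and the argument above runs without change.
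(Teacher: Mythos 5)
Your overall route is exactly the one the paper intends: the paper gives no written proof of this lemma at all, only the preceding remark that Lemmas~\ref{lemma:cube1} and~\ref{lemma:cube2} imply it, and your case analysis is the natural way to flesh that remark out. Your second and third steps are sound: the common-neighbour bound in the hypercube (two non-adjacent vertices share at most two neighbours) is correct, as is the computation $|F'|\leqslant (2n-3)-n=n-3<n-2=c(Q^n-N[u])$ that makes Lemma~\ref{lemma:cube2} applicable.

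There is, however, one genuine gap, in your first case. Lemma~\ref{lemma:cube1} (whose symbol $\varsubsetneq$ is evidently a typo for $\not\subseteq$, the Esfahanian condition) requires $N(u)\not\subseteq F$ for \emph{every} vertex $u$ of $Q^n$, including the vertices of $F$ itself, whereas you verify this hypothesis only for the surviving vertices. These conditions are not equivalent once $n\geqslant 4$: since $n+1\leqslant 2n-3$, the set $F$ may contain the closed neighbourhood of a deleted vertex (e.g. $F=N[u]$), in which case $Q^n-F$ has no isolated vertex, yet the hypothesis of Lemma~\ref{lemma:cube1} fails and your appeal to it is unjustified. The conclusion in that sub-case is still true, and the repair uses the same tools: if $u\in F$ and $N(u)\subseteq F$, then $N[u]\subseteq F$, so $Q^n-F=\left(Q^n-N[u]\right)-F''$ with $F''=F\setminus N[u]$ of size at most $2n-3-(n+1)=n-4<n-2=c(Q^n-N[u])$, and Lemma~\ref{lemma:cube2} gives connectivity directly. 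The cleanest organization is therefore to split on whether \emph{any} vertex of $Q^n$, deleted or not, has its open neighbourhood inside $F$: if none does, Lemma~\ref{lemma:cube1} applies; if some $u$ does, Lemma~\ref{lemma:cube2} shows that $Q^n-F$, minus $u$ if $u$ survives, is connected, whence $k(Q^n-F)\leqslant 2$. With this split your Step 2 (at most one isolated vertex) becomes superfluous, though it is correct as stated.
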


From the proof of the Theorem 6.2 in \cite{Mier04}, we can conclude the following characterization of the $n$-cube.

\begin{lemma}\label{lemma:cube}
A connected $n$-regular graph is isomorphic to the $n$-cube if and only if it has $2^n$ vertices, $2^{n-2}{n \choose 2}$ subgraphs isomorphic to $C_4$ and no subgraph isomorphic to $K_{2,3}$.
\end{lemma}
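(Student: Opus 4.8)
The plan is to prove the two directions separately, with the ``if'' direction carrying all the weight. For ``only if'' I would simply verify the parameters of $Q^n$: it is connected, $n$-regular on $2^n$ vertices, and any two of its vertices at Hamming distance $2$ have exactly $2$ common neighbours while vertices at any other distance have none, so $Q^n$ contains no $K_{2,3}$. Counting the unordered pairs at distance $2$ gives $2^{n-1}\binom{n}{2}$ of them, each contributing one $C_4$; since every four-cycle is recorded once at each of its two diagonals, $Q^n$ has $\tfrac12\cdot 2^{n-1}\binom{n}{2}=2^{n-2}\binom{n}{2}$ four-cycles, as required.

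For the ``if'' direction, let $G$ be connected and $n$-regular with $2^n$ vertices, exactly $2^{n-2}\binom{n}{2}$ copies of $C_4$, and no $K_{2,3}$. The first step is to turn these hypotheses into a local condition on common neighbours. Writing $\mathrm{cod}(u,v)$ for the number of common neighbours of a pair $\{u,v\}$, I would double count cherries ($P_3$'s) and four-cycles. Counting cherries by their centre gives
\[
\sum_{\{u,v\}}\mathrm{cod}(u,v)=\sum_{w\in V}\binom{d(w)}{2}=2^{n}\binom{n}{2},
\]
and, since each four-cycle is a pair of opposite vertices with two chosen common neighbours and is counted once at each of its two diagonals, the number of four-cycles equals $\tfrac12\sum_{\{u,v\}}\binom{\mathrm{cod}(u,v)}{2}$. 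The absence of $K_{2,3}$ forces $\mathrm{cod}(u,v)\le 2$ for every pair, so if $a$ and $b$ denote the numbers of pairs with common-neighbour count $1$ and $2$, then $\sum\binom{\mathrm{cod}}{2}=b$; hence $2^{n-2}\binom{n}{2}=\tfrac12 b$ gives $b=2^{n-1}\binom{n}{2}$, and substituting into $a+2b=2^{n}\binom{n}{2}$ yields $a=0$. Thus every two vertices of $G$ have either $0$ or exactly $2$ common neighbours, i.e. $G$ is a $(0,2)$-graph.

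The second step is to conclude, from the $(0,2)$ property together with $n$-regularity and the exact vertex count $2^n$, that $G\cong Q^n$. Here I would invoke the structural characterisation of $(0,2)$-graphs implicit in the proof of Theorem~6.2 of \cite{Mier04} (going back to Mulder): a connected $n$-regular $(0,2)$-graph has at most $2^n$ vertices, with equality precisely for $Q^n$. As our $G$ attains this bound, it must be the cube. Were a self-contained argument wanted, I would fix a root $r$, pass to the distance layers $L_0,L_1,\dots$, and use the $(0,2)$ property to control down-degrees and to coordinatise each vertex by a subset of $N(r)$, forcing $|L_d|\le\binom{n}{d}$ and hence $\sum_d|L_d|\le 2^n$, with equality propagating the cube structure layer by layer.

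I expect the genuine obstacle to be this final structural step rather than the counting. The $(0,2)$ property alone does \emph{not} force a cube: $K_4$ is a $3$-regular $(0,2)$-graph with only $4<2^3$ vertices and many triangles. So the vertex count $2^n$ is doing essential work, and the delicate point is that attaining it rules out all triangles and pins adjacency down to symmetric differences of coordinates. This is exactly the content one extracts from \cite{Mier04}, which is why I would cite it here rather than reprove the extremal characterisation of $(0,2)$-graphs from scratch.
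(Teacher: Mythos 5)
Your proposal is correct and takes essentially the same route as the paper: the paper gives no argument of its own, simply extracting the lemma from the proof of Theorem 6.2 in \cite{Mier04}, and that proof is precisely your argument --- double counting paths of length two and four-cycles to show (using the absence of $K_{2,3}$, hence $\mathrm{cod}(u,v)\leqslant 2$) that every pair of vertices has $0$ or exactly $2$ common neighbours, and then invoking Mulder's extremal theorem that a connected $n$-regular $(0,2)$-graph has at most $2^n$ vertices with equality only for $Q^n$. If anything, your write-up is more self-contained than the paper's, since you carry out the counting explicitly and defer to the literature only for the final extremal step.
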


We are now in a position to prove that $Q^n$ is $Q$-unique.

\begin{theorem}\label{theorem:cube}
The $n$-cube is $Q$-unique for every $n\geqslant 2$.
\end{theorem}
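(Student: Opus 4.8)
The plan is to show that any graph $H$ that is $Q$-equivalent to the $n$-cube $Q^n$ must in fact be isomorphic to $Q^n$, and the natural strategy is to verify each of the three conditions in the characterization of Lemma~\ref{lemma:cube}. That is, I would prove that (i) $H$ is a connected $n$-regular graph on $2^n$ vertices, (ii) $H$ has exactly $2^{n-2}\binom{n}{2}$ subgraphs isomorphic to $C_4$, and (iii) $H$ contains no subgraph isomorphic to $K_{2,3}$. Once all three are established, Lemma~\ref{lemma:cube} immediately gives $H \cong Q^n$.

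First I would dispose of the easy structural invariants. Since $Q^n$ is connected with $2^n$ vertices, and both the number of vertices and the number of components are read off from $Q(G;x,y)$ by Proposition~\ref{prop:number}, the same holds for $H$. Regularity transfers by Proposition~\ref{proposition:regularity}: as $Q^n$ is $n$-regular, $H$ is $n$-regular as well. This settles condition (i). For condition (ii), I would invoke Theorem~\ref{theorem:4cycle}: the $n$-cube is an $n$-regular bipartite graph, so the number of its $C_4$ subgraphs is determined by $Q(G;x,y)$, and since $Q^n$ has exactly $2^{n-2}\binom{n}{2}$ copies of $C_4$, the graph $H$ must have the same count, provided $H$ is also bipartite. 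Bipartiteness of $H$ should follow from independence-number information or from the coefficient structure; I would check that the independence number $\alpha(H)=\alpha(Q^n)=2^{n-1}$ (available via Proposition~\ref{prop:number}) together with $n$-regularity forces $H$ to be bipartite, or argue directly that the low-order coefficients of $Q$ encode the absence of odd short cycles.

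The crux is condition (iii), namely ruling out a $K_{2,3}$ subgraph in $H$, and this is where Lemma~\ref{lemma:cube3} should do the heavy lifting. The key observation is that a $K_{2,3}$ yields, after deleting an appropriate vertex set, a disconnection into many components sooner than is possible in $Q^n$; more precisely, Lemma~\ref{lemma:cube3} says that deleting any set of at most $2n-3$ vertices from $Q^n$ leaves at most two components, and since the coefficients $[x^{2^n-s}y^j]Q$ are determined by the polynomial for every $s$ and $j$, the equivalent graph $H$ must satisfy the same constraint: no deletion of $2n-3$ or fewer vertices can produce three or more components. I would then argue that the presence of a $K_{2,3}$ in an $n$-regular graph with the cube's $C_4$-count would permit a small separating set disconnecting $H$ into too many pieces, contradicting this constraint. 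The main obstacle will be making this last step rigorous: I must verify that the existence of a $K_{2,3}$ subgraph, under the regularity and $C_4$-count constraints, genuinely forces a violation of the component bound encoded in $Q(H;x,y)$, rather than merely suggesting it. Handling the interaction between the forced $C_4$-count and the forbidden $K_{2,3}$ carefully — so that the two conditions together pin down exactly the cube structure — is the delicate part of the argument.
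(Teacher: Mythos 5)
Your skeleton is exactly the paper's: reduce to Lemma~\ref{lemma:cube}, get order, connectedness and regularity from Propositions~\ref{prop:number} and~\ref{proposition:regularity}, get the $C_4$-count from Theorem~\ref{theorem:4cycle} once bipartiteness is known, and exclude $K_{2,3}$ via Lemma~\ref{lemma:cube3}. However, as written you defer precisely the two steps where all the work lies (bipartiteness of $H$, and the rigorous $K_{2,3}$ exclusion), so the proposal is a plan rather than a proof. The good news is that both gaps close, and your first suggestion for bipartiteness closes more cleanly than the paper's own argument. Concretely: let $I$ be a maximum independent set of $H$, so $|I|=\alpha(H)=2^{n-1}$. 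By $n$-regularity the number of edges with an endpoint in $I$ is $n2^{n-1}$, and since $I$ is independent each such edge has exactly one endpoint in $I$; but $|E(H)|=n2^{n-1}$, so every edge of $H$ meets $I$, hence $V(H)\setminus I$ is also independent and $H$ is bipartite. (The paper instead shows $[x^{2^{n-1}}y^{2^{n-1}}]Q=2$, bounds $\deg_y[x^{2^n-s}]Q\leqslant s$ using Hamiltonicity of $Q^n$, and deduces that the two maximum independent sets are disjoint --- more machinery for the same conclusion.) Your fallback suggestion, reading ``absence of odd short cycles'' off low-order coefficients, would not suffice: bipartiteness requires excluding all odd cycles, not just short ones.

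For the $K_{2,3}$ exclusion, the step you flag as the delicate part is in fact clean, and the $C_4$-count plays no role in it; you need only regularity, bipartiteness, and Lemma~\ref{lemma:cube3}. Suppose $H$ contains a $K_{2,3}$, i.e.\ two vertices $a,b$ with at least three common neighbors; since $H$ is bipartite, $a$ and $b$ lie in the same part, so $a\not\sim b$. By $n$-regularity, $|N(a)\cup N(b)|\leqslant 2n-3$, and deleting this set leaves $a$ and $b$ isolated together with at least one further vertex (since $2^n-2-(2n-3)\geqslant 1$ for $n\geqslant 3$, while for $n=2$ a vertex of degree $2$ cannot have three common neighbors with another), i.e.\ at least three components. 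Hence $\deg_y[x^{2^n-s}]Q(H;x,y)\geqslant 3$ for some $s\leqslant 2n-3$, whereas Lemma~\ref{lemma:cube3} gives $\deg_y[x^{2^n-s}]Q(Q^n;x,y)\leqslant 2$ for all such $s$, contradicting $Q$-equivalence. This is exactly the paper's Claim~3. So your outline is correct and matches the paper's route; what was missing was carrying out these two arguments, one of which (bipartiteness) you can in fact do more simply than the paper does.
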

\begin{proof}
Let $H$ be $Q$-equivalent to the $n$-cube $Q^n$. Then $H$ is $n$-connected, $n$-regular, has $2^n$ vertices and $n2^{n-1}$ edges, $\alpha(H)=2^{n-1}$.

 {\bf Claim 1} For every $s\geqslant 1$, $deg_y[x^{2^n-s}]Q(H;x,y)\leqslant s$.

\begin{proof}
If graph $G$ is Hamiltonian, then $k(G-A)\leqslant |A|$ for every subset $A \subseteq V(G)$.  As well and long known, $Q^n$ is  Hamiltonian. Therefore,
for every vertex subset $A \subseteq V(Q^n)$ of cardinality $s$, $k(Q^n[V\backslash A])\leqslant s $. Then $deg_y[x^{2^n-s}]Q(H;x,y)=deg_y[x^{2^n-s}]Q(Q^{n};x,y) \leqslant s$.
\end{proof}

It is evident that $[x^{2^{n-1}}y^{2^{n-1}}]Q(H;x,y)=[x^{2^{n-1}}y^{2^{n-1}}]Q(Q^n);x,y)=2$. Let $X,Y$ be two separating sets of $H$ and  $|X|=|Y|=2^{n-1}$.

{\bf Claim 2} $X$ and $Y$ are disjoint.

\begin{proof}
If not, let us suppose $Z=X\cap Y$ and $|Z|=s\geqslant 1$. Let $X=\{x_1,\dots,x_{2^{n-1}-s},z_1,\dots,z_s\}$, $Y=\{y_1,\dots,y_{2^{n-1}-s},z_1,\dots,z_s\}$, $Z=\{z_1,\dots,z_s\}$ and $U=X\cup Y$. Then $|U|=2^n-s$ and $k(H[U])\geqslant s+1$, which leads to $deg_y[x^{2^n-s}]Q(H;x,y)\geqslant s+1$, a contradiction to Claim 1.
\end{proof}

Claim 2 implies that $H$ is a regular bipartite graph. Consequently, $H$ has $2^{n-2}{n \choose 2}$ subgraphs isomorphic to $C_4$. In view of Lemma \ref{lemma:cube}, we just need to prove that  $H$ has no subgraph isomorphic to $K_{2,3}$. Let $H=(V_1,V_2,E)$ with $|V_1|=|V_2|=2^{n-1}$  and $V_1\cap V_2=\emptyset$. For every pair of vertices $a$ and $b$ at distance $2$ in $H$, $a$ and $b$ belong to the same partite set. Let $n(a,b)$ be the number of common neighbor vertices they have.

{\bf Claim 3} $n(a,b)\leqslant 2$.

\begin{proof}
Without loss of generality we suppose there is a pair of vertices $a$ and $b$ at distance $2$ in $V_1$ such that $n(a,b)\geqslant 3$. Let $c_1,c_2,c_3$ are three vertices which are adjacent to both $a$ and  $b$. Then  $c_1,c_2,c_3\in V_2$. Since  $H$ is $n$-regular, we can let
\begin{eqnarray*}
N(a)&=&\{c_1,c_2,c_3,a_1,a_2,\dots,a_{n-3}\},\\
N(b)&=&\{c_1,c_2,c_3,b_1,b_2,\dots,b_{n-3}\},
\end{eqnarray*}
where $a_i$ and $b_i$ are neighbor vertices of $a$ and $b$, respectively. Let $N(a,b)=N(a)\cup N(b)$. Then $|N(a,b)| \leqslant 2n-3$ and $k(H[V\backslash N(a,b)])\geqslant 3$ which is  a contradiction with Lemma~\ref{lemma:cube3}.
\end{proof}

It follows from claim 3 that $H$ has no subgraph isomorphism to $K_{2,3}$.
\end{proof}

%

\section{Distinctive power}

We denote by $m(G;x)=\Sigma_{i=0}^n(-1)^im_ix^{n-2i}$ the matching polynomial with $m_i$ is the number of $i$-matchings in $G$, by $p(G;x)$ the characteristic polynomial, by $T(G;x,y)$ the Tutte polynomial. The Tutte polynomial does not distinguish 1-connected graphs and the subgraph component polynomial does not distinguish between graphs which differ only by the multiplicity of their edges. In this section we shall give a family of 2-connected simple graphs with the same Tutte polynomial but different subgraph component polynomials. Moreover, we find two  $Q$-equivalent simple graphs which can be distinguished by the character polynomial $p(G;x)$, the matching polynomial $m(G;x)$ or the Tutte polynomial $T(G;x,y)$.

The join $G\vee H$ of two graphs $G=(V,E)$ and $H=(W,F)$ with $V \cap W=\emptyset$ is the graph obtained from $G\cup H$ by introducing edges from each vertex of $G$ to each vertex of $H$.

The graph $K_1\vee P_n$ is called   fan graph $F_n$. In the fan $F_n$, the vertices corresponding to the path $P_n$ are
labeled from $v_1$ to $v_n$, and the central vertex corresponding to $K_1$ is labeled as $v_0$. $F_{n-1}^+$ arises from $F_{n-1}$ by adding a new vertex $v_n$ and two new edges $\{v_{n-2},v_n\},\{v_{n-1},v_n\}$, see Fig.~\ref{fig.2}.
\begin{figure}[h]
\begin{center}
\includegraphics[width=0.6\linewidth]{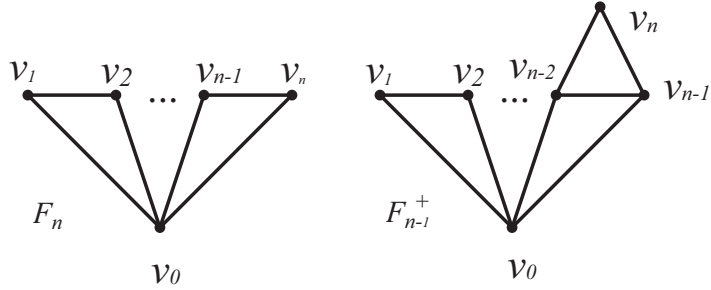}
\end{center}
\caption{The graphs $F_n$, $F_{n-1}^+$.}
\label{fig.2}
\end{figure}
\begin{proposition}
For $n\geqslant 5$, $T(F_n;x,y)=T(F_{n-1}^+;x,y)$ but $Q(F_n;x,y)\neq Q(F_{n-1}^+;x,y)$.
\end{proposition}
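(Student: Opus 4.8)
The plan is to prove the two claims separately, since they have genuinely different flavors: the Tutte-equivalence is an algebraic/combinatorial identity about deletion--contraction, while the $Q$-inequivalence is a concrete coefficient computation. For the Tutte part, I would first observe that both $F_n$ and $F_{n-1}^+$ are obtained by attaching a fan structure to the central vertex $v_0$, and that the two graphs differ only in how the last two path vertices are hung on the rest of the graph. The standard approach is to apply the deletion--contraction recurrence for $T(G;x,y)$ to a carefully chosen edge in each graph and reduce both sides to the same smaller graphs, or alternatively to write $T$ in terms of the rank--nullity generating function and show that the two graphs have the same number of subsets $A\subseteq E$ of each rank and size. Since $F_n$ and $F_{n-1}^+$ have the same number of vertices ($n+1$), the same number of edges, and are both $2$-connected with the same cycle structure counts, I expect the cleanest route is to exhibit a bijection between spanning subgraphs (or between the edge subsets indexed by rank and nullity) that preserves both statistics; this is the heart of showing $T$-equivalence and is where the special choice of the two extra edges $\{v_{n-2},v_n\}$ and $\{v_{n-1},v_n\}$ matters.

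For the $Q$-inequivalence, the strategy is to identify a single coefficient $[x^i y^j]Q$ on which the two graphs disagree. By Proposition~\ref{prop:number} the order, size, and independence number already coincide (both graphs have $n+1$ vertices and $2n-1$ edges), so these low-order coefficients cannot separate them; I must look at a coefficient that is sensitive to the local structure near the modified vertices. A natural candidate is a coefficient counting small induced subgraphs with a prescribed number of components --- for instance $[x^3 y^j]Q$ or $[x^4 y^j]Q$ --- since these count induced subgraphs on three or four vertices, and the difference between $F_n$ and $F_{n-1}^+$ lives precisely in the induced adjacency among $\{v_{n-2},v_{n-1},v_n,v_0\}$. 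Concretely, in $F_n$ the vertex $v_n$ is adjacent to $v_0$ and to $v_{n-1}$, whereas in $F_{n-1}^+$ the vertex $v_n$ is adjacent to $v_{n-2}$ and $v_{n-1}$ but not to $v_0$; this changes the number of induced triangles and induced paths, which I would detect by direct enumeration of the relevant $3$- or $4$-subsets.

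The concrete calculation I would carry out is to fix the smallest informative coefficient and count the contributing induced subgraphs in each graph. Since $F_n$ has $v_0$ adjacent to all $n$ path vertices, the number of induced triangles in $F_n$ is $n-1$ (one for each edge $\{v_i,v_{i+1}\}$ of the path together with $v_0$), and I would compute the corresponding count in $F_{n-1}^+$, where the triangle structure near $v_n$ is altered because $v_n$ is no longer joined to $v_0$. This discrepancy in triangle counts feeds directly into $[x^3 y^1]Q$ (the number of connected induced $3$-vertex subgraphs, which counts triangles plus induced paths $P_3$), so I would verify that $[x^3y^1]Q(F_n) \neq [x^3y^1]Q(F_{n-1}^+)$, or if these happen to agree, move up to a $4$-vertex coefficient. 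The main obstacle I anticipate is the Tutte-equivalence half: establishing the bijection on edge-subsets (equivalently, matching the corank--nullity polynomials) requires care to confirm that removing $v_n$'s attachment to $v_0$ and reattaching it to $v_{n-2}$ genuinely preserves the rank--nullity distribution rather than merely preserving a few aggregate invariants, and this is where a clean deletion--contraction argument on the two new edges will be needed.
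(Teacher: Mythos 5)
There is a genuine gap, and it affects both halves of the statement. For the Tutte equivalence you never actually give a proof: you name two candidate strategies (deletion--contraction, or a rank--nullity-preserving bijection on edge subsets), execute neither, and yourself flag this half as ``the main obstacle.'' The key idea you are missing is the one the paper uses: $F_n$ and $F_{n-1}^+$ are planar graphs with the \emph{same dual graph}, and since $T(G;x,y)=T(G^*;x,y)$ for any planar graph $G$, the equality $T(F_n;x,y)=T(F_{n-1}^+;x,y)$ is immediate. (Your deletion--contraction route is in fact viable: deleting $\{v_0,v_n\}$ from $F_n$ and $\{v_{n-2},v_n\}$ from $F_{n-1}^+$ produces the same graph, and the two contractions are $F_{n-1}$ with a parallel edge at $\{v_0,v_{n-1}\}$, respectively at $\{v_{n-2},v_{n-1}\}$, which one further deletion--contraction step, together with the fact that a loop only contributes a factor $y$, shows to be $T$-equivalent --- but none of this is in your write-up.)

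For the $Q$-inequivalence your concrete plan fails at its first step: the two graphs have the \emph{same} number of triangles, namely $n-1$ each ($F_n$ has the triangles $\{v_0,v_i,v_{i+1}\}$ for $1\leqslant i\leqslant n-1$, while $F_{n-1}^+$ has the $n-2$ triangles of $F_{n-1}$ plus the new triangle $\{v_{n-2},v_{n-1},v_n\}$). Your hedge of moving to another coefficient is what actually saves the strategy: $[x^3y]Q=\sum_v\binom{d(v)}{2}-2t$, where $t$ is the number of triangles, and since the degree sequences differ ($(n,3,\dots,3,2,2)$ versus $(n-1,4,3,\dots,3,2,2)$), the two coefficients differ by exactly $n-4\neq 0$ for $n\geqslant 5$; so the discrepancy comes from the degree sequence, not from triangles, and in any case you never carry out this computation. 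The paper avoids coefficient computations altogether: deleting or contracting $v_n$ yields $F_{n-1}$ in both graphs, while extraction yields $F_n-N[v_n]=P_{n-2}$ but $F_{n-1}^+-N[v_n]=F_{n-3}$; since paths are $Q$-unique (Theorem~\ref{theorem:path}), these two graphs have different subgraph component polynomials, and the recurrence of Proposition~\ref{proposition:elimination} then forces $Q(F_n;x,y)\neq Q(F_{n-1}^+;x,y)$.
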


\begin{proof}
Observe that $F_n$ and $F_{n-1}^+$ have the same dual graphs. Since $T(G;x,y)=T(G^*;x,y)$ for a planar graph $G$ and its dual graph $G^*$, we have $T(F_n;x,y)=T(F_{n-1}^+;x,y)$. We note that $F_n-\{v_n\}=F_{n-1}^+-\{v_n\}=F_{n-1}$, $F_n/v_n=F_{n-1}^+/v_n=F_{n-1}$, $F_n-N[v_n]=P_{n-2}$ and $F_{n-1}-N[v_n]=F_{n-3}$. Since paths are $Q$-unique, then $Q(P_{n-2};x,y)\neq Q(F_{n-3};x,y)$. Proposition~\ref{proposition:elimination} implies that $Q(F_n;x,y)\neq Q(F_{n-1};x,y)$.
\end{proof}

\begin{figure}[h]
\begin{center}
\includegraphics[width=0.5\linewidth]{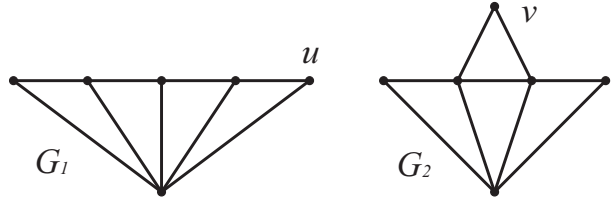}
\end{center}
\caption{The graphs $G_1$, $G_2$.}
\label{power}
\end{figure}
 \begin{proposition} For the graphs $G_1$ and $G_2$ illustrated in Fig.~\ref{power}, we have
 \label{proposition:power}

 (1)  $Q(G_1;x,y)=Q(G_{2};x,y)$.

 (2)  $p(G_1;x)\neq p(G_{2};x)$.

 (3)  $m(G_1;x)\neq m(G_{2};x)$.

 (4)  $T(G_1;x,y)\neq T(G_{2};x,y)$.

 \end{proposition}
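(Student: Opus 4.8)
The statement asserts four facts about two specific small graphs $G_1$ and $G_2$ drawn in Fig.~\ref{power}. Since these are concrete finite graphs, every item reduces to a finite computation; the real work is organizing those computations cleanly rather than inventing a clever argument. The plan is to read off $V(G_i)$ and $E(G_i)$ from the figure, check that $G_1$ and $G_2$ share the same order $n$, the same size, and the same number of components (these are forced to agree by parts (2)--(4) being about strictly finer invariants, but they are also the necessary preconditions for (1)), and then attack each of the four items by its own dedicated invariant.

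\textbf{Part (1), the $Q$-equivalence.} This is the crux and the main obstacle, because $Q(G;x,y)=\sum_{X\subseteq V}x^{|X|}y^{k(G[X])}$ is a sum over all $2^n$ vertex subsets, and one must verify that for every pair $(i,j)$ the counts $q_{i,j}(G_1)$ and $q_{i,j}(G_2)$ coincide. My approach is to compute the full coefficient table $q_{i,j}$ for each graph by enumerating, for each subset size $i$ from $0$ to $n$, how many induced subgraphs split into exactly $j$ components. To keep this tractable I would use Proposition~\ref{proposition:elimination}: pick a convenient vertex $v$ (ideally one of high degree, or one whose deletion, extraction, and contraction all produce small or already-understood graphs such as paths, cycles, or complete bipartite graphs whose $Q$-polynomials we can compute or look up) and expand
\[
Q(G_i;x,y)=Q(G_i-v;x,y)+x(y-1)Q(G_i-N[v];x,y)+xQ(G_i/v;x,y),
\]
recursing until the pieces are graphs with known polynomials. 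The goal is to show both expansions collapse to the same bivariate polynomial. Alternatively, since the graphs are small, a direct tabulation of $q_{i,j}$ by hand (or the observation that $G_1$ and $G_2$ differ by a local rewiring that preserves, for each $i$ and $j$, the number of $i$-subsets inducing $j$ components) suffices; I would present the final coefficient table and confirm the two tables are identical.

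\textbf{Parts (2)--(4), the distinctions.} Each of these is an \emph{inequality} of polynomials, so it is enough to exhibit a single coefficient where the two graphs disagree. For part (2) I would compute the characteristic polynomials $p(G_1;x)$ and $p(G_2;x)$ of the adjacency matrices and point to one differing coefficient; equivalently, one can compare a low-order coefficient such as the one counting triangles (the coefficient of $x^{n-3}$ is $-2\cdot(\text{number of triangles})$), so I would count triangles in each graph and note they differ. For part (3), the matching polynomial $m(G;x)=\sum_i(-1)^im_ix^{n-2i}$ is distinguished by comparing the matching numbers $m_i$; I would compute $m_2$ (the number of $2$-matchings) or whichever $m_i$ first separates the two graphs. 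For part (4), the Tutte polynomial, I would invoke a standard Tutte-determined invariant---the number of spanning trees, $T(G;1,1)$, or the number of spanning forests via another evaluation---and show it differs for $G_1$ and $G_2$. In each of the three cases the argument is identical in spirit: name an invariant known to be recoverable from that polynomial, compute it for both graphs, and observe the two values differ. The only genuine difficulty is the bookkeeping in part (1); once the identical $q_{i,j}$ tables are in hand, parts (2)--(4) are short consistency checks that simultaneously confirm $G_1\not\cong G_2$, making the example a bona fide answer to Problem~1.1.
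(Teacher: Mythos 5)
Your plan is essentially correct, and for part (1) it reaches for the same tool the paper uses; the difference is in execution, and two of your concrete shortcuts would fail. For (1), the paper needs no recursion and no polynomial computation at all: it observes that $G_1$ and $G_2$ contain vertices $u$ and $v$, respectively, whose three elimination results coincide pairwise, namely $G_1-u=G_2-v=F_4$, $G_1-N[u]=G_2-N[v]=P_3$, and $G_1/u=G_2/v=F_4$, so a \emph{single} application of Proposition~\ref{proposition:elimination} gives $Q(G_1;x,y)=Q(G_2;x,y)$ immediately. Your version ("recurse until the pieces are known, then compare the two expansions") would get there, but the matched one-step elimination is the actual insight behind the example; spotting it is what makes the proof two lines rather than a table of $q_{i,j}$'s. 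For parts (2)--(4) the paper simply exhibits the full polynomials (computed with Maple), which is what your fallback amounts to; however, be warned that the specific invariants you name first both fail on these particular graphs, precisely because $G_1$ and $G_2$ are so similar: they have the same number of triangles (both characteristic polynomials have $x^3$-coefficient $-8$, i.e.\ four triangles each) and the same number of $2$-matchings ($m_2=15$ for both). The first disagreements occur only in the linear and constant terms of $p(G;x)$ ($8x-1$ versus $6x-4$) and in $m_3$ (two versus three perfect matchings). Your Tutte shortcut does work: $T(G_1;1,1)=55\neq 54=T(G_2;1,1)$, so the spanning-tree counts differ. In short, your plan survives only because of its hedges ("point to one differing coefficient", "whichever $m_i$ first separates"); had you committed to triangles and $2$-matchings, the argument would collapse at (2) and (3), and this is worth internalizing: graphs engineered to be $Q$-equivalent tend to agree on many low-order counting invariants, so the distinguishing coefficients must be hunted, not guessed.
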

 \begin{proof}
 We eliminate vertices $u$ and $v$ in graphs $G_1$ and $G_2$, respectively. It is not difficult to see that $G_1-u=G_2-v=F_4$, $G_1-N[u]=G_2-N[v]=P_3$ and $G_1/u=G_2/v=F_4$. Then $Q(G_1;x,y)=Q(G_{2};x,y)$. Using the graph package for Maple  we can  compute the characteristic polynomials, the matching polynomials, and the Tutte polynomials of $G_1$ and $G_2$ as follows:
\begin{eqnarray*}
p(G_1;x)&=&x^6-9x^4-8x^3+9x^2+8x-1,\\
p(G_2;x)&=&x^6-9x^4-8x^3+9x^2+6x-4,\\
 m(G_1;x)&=&x^6-9x^4+15x^2-2,\\
m(G_2;x)&=&x^6-9x^4+15x^2-3,
\end{eqnarray*}
\begin{eqnarray*}
 T(G_1;x,y)&=&x^5+4x^4+4x^3y+3x^2y^2+2xy^3+y^4+6x^3\\
             &+&9x^2y+7xy^2+3y^3+4x^2+6xy+3y^2+x+y,\\
T(G_2;x,y)&=&x^5+4x^4+4x^3y+3x^2y^2+3xy^3+y^4+6x^3\\
          &+&9x^2y+6xy^2+2y^3+4x^2+6xy+3y^2+x+y.
\end{eqnarray*}
\end{proof}

\section*{Acknowledgements}
This project was supported by the National Natural Science Foundation of China (No. 1171102) and the Hunan Provincial Innovation Foundation For Postgraduate (No. CX2013B214).

\end{document}

\end{document}